\documentclass[11pt]{amsart}

\title[Universal admissibility for scattering transforms]{Universal admissibility for scattering transforms}

\usepackage[a4paper,top=2.5cm,bottom=2.5cm,hmargin=2.5cm,heightrounded]{geometry}
\usepackage[T1]{fontenc}
\usepackage[utf8]{inputenc}
\usepackage{lmodern}
\usepackage{microtype}
\usepackage{amsmath,amssymb,amsthm,mathtools}
\mathtoolsset{showonlyrefs}
\usepackage{cite}
\usepackage[hidelinks]{hyperref}
\usepackage{tikz}
\usetikzlibrary{arrows.meta,calc,positioning,decorations.pathreplacing}
\theoremstyle{plain}
\newtheorem{theorem}{Theorem}[section]

\newtheorem{lemma}[theorem]{Lemma}
\newtheorem{corollary}[theorem]{Corollary}
\theoremstyle{definition}
\newtheorem{definition}[theorem]{Definition}

\theoremstyle{remark}
\newtheorem{remark}[theorem]{Remark}

\newcommand{\R}{\mathbb R}

\newcommand{\N}{\mathbb N}
\newcommand{\Nzero}{\mathbb N_0}
\newcommand{\Z}{\mathbb Z}
\newcommand{\T}{\mathbb T}
\newcommand{\G}{G}
\newcommand{\Gdual}{\widehat G}

\newcommand{\eps}{\varepsilon}
\newcommand{\iu}{\mathrm{i}}
\newcommand{\wh}{\widehat}
\newcommand{\Id}{\mathrm{Id}}

\newcommand{\dist}{\operatorname{dist}}
\newcommand{\supp}{\operatorname{supp}}

\newcommand{\abs}[1]{\left\lvert #1 \right\rvert}
\newcommand{\norm}[1]{\left\lVert #1 \right\rVert}
\newcommand{\ip}[2]{\left\langle #1,#2\right\rangle}

\newcommand{\dd}{\,\mathrm{d}}
\newcommand{\dx}{\,\mathrm{d}x}
\newcommand{\dxi}{\,\mathrm{d}\xi}

\newcommand{\given}{\nonscript\,\delimsize\vert\nonscript\,}
\DeclarePairedDelimiterX{\set}[1]{\lbrace}{\rbrace}{#1}

\newcommand{\calJ}{\mathcal J}
\newcommand{\calO}{\mathcal O}
\newcommand{\calS}{\mathcal S}

\newcommand{\PathDepth}[1]{\Lambda^{#1}}
\newcommand{\PathsAll}{\Lambda^\ast}
\newcommand{\PathsLe}[1]{\Lambda^{\leq #1}}

\newcommand{\Scat}{\mathsf{S}_{\chi}}
\newcommand{\ScatEq}[1]{\mathsf{S}_{\chi, #1}}
\newcommand{\ScatLe}[1]{\mathsf{S}_{\chi,\leq #1}}
\newcommand{\ScatSet}[1]{\mathsf{S}_{\chi}[#1]}

\usepackage{orcidlink}

\author[Max Getter]{%
Max Getter
\orcidlink{0009-0001-1269-4623}%
}

\address{%
Chair for Geometry and Analysis,
RWTH Aachen University,
D-52062 Aachen, Germany
}
\email[Max Getter]{getter@mathga.rwth-aachen.de}

\subjclass[2020]{Primary 42C15, 42C40; Secondary 94A12, 46E35, 68T07}
\keywords{Nonlinear harmonic analysis, scattering transforms, Parseval filter banks, admissibility, energy propagation, norm preservation}

\begin{document}

\begin{abstract}
We resolve the admissibility problem for scattering transforms: every Parseval filter bank on $\mathbb{R}^d$ whose low-pass filter is nonvanishing at the origin induces a norm-preserving scattering transform, without requiring any analyticity, frequency-localization, or geometric covering assumptions. Furthermore, for any Sobolev input $f\in H^s(\mathbb{R}^d)$, we establish a universal decay bound of $\mathcal{O}(N^{-\min\{s,1\}/d})$ on the depth-$N$ residual norm. Finally, we construct a filter bank of band-limited Schwartz functions and a band-limited Schwartz input for which the low-pass multiplier equals one near the origin but the propagated energy decays subexponentially. This demonstrates that no universal exponential rate can hold under these assumptions alone.
\end{abstract}
\maketitle

\section{Introduction}

The scattering transform, introduced by Mallat in \cite{Mallat2012}, is a nonlinear multiscale representation obtained by alternating convolutions with prescribed filters and pointwise modulus nonlinearities, followed by low-pass averaging. Its architecture may be viewed as a mathematically tractable model of a deep convolutional neural network: the convolutional filters are fixed rather than learned, the modulus acts as the nonlinearity, and the outputs collected at successive depths form a hierarchical family of features. 
This rigid structure makes it possible to establish properties that remain difficult to prove for general neural networks, including nonexpansiveness, approximate translation invariance, and stability under geometric deformations; see, among others, \cite{Mallat2012,BrunaMallat2013,WiatowskiBolcskei2018,NicolaTrapasso2023}. 

The theory of scattering transforms also has notable connections to other areas. Invertibility questions are closely related to phase retrieval \cite{mallat2015phase,alaifari2019stable,fuhr2025cheeger}, while stability arguments interact with the stability theory of max filtering \cite{cahill2025group} and with paradifferential techniques from nonlinear PDE \cite{getter2026stability}. Scattering networks have also found applications ranging from image and audio processing to quantum chemistry and physics \cite{BrunaMallat2013,SprechmannBrunaLeCun2015,EickenbergExarchakisHirnMallatThiry2018,ChengMorelAllysMenardMallat2024,LicciardiCarboneRondoniNagar2025}. Motivated by these developments and by the intrinsically non-Euclidean structure of many natural data domains, scattering-type networks have recently been generalized beyond Euclidean settings; see \cite{bronstein2017geometric,ChewEtAl2024,getter2025digital,arias2025scattering} and the references therein.

A fundamental question is whether the features extracted over all network depths retain the full energy of the input. At each layer, the Parseval identity for the underlying filters decomposes the propagated energy into an observed part, extracted by the low-pass filter, and a residual part, passed to the next layer. Iterating this identity produces a nonincreasing sequence of residual energies. If this residual energy vanishes as the depth tends to infinity, the filter bank is said to be \emph{admissible}, and the scattering transform preserves the $L^2$-norm of the input \cite[Theorem~2.6]{Mallat2012}.

Admissibility has consequences beyond deterministic norm preservation. In Mallat's original theory \cite{Mallat2012}, energy decay also underpins an energy conservation principle for stationary processes and was used to establish limiting translation invariance. Although limiting translation invariance can now be recovered without admissibility \cite{CzajaKolstoeKoralov2024}, control of the energy propagated to deep layers remains important in the stability analysis of scattering representations \cite[Section~6.2]{getter2026stability}.

Historically, establishing admissibility has required restrictive assumptions on the underlying filters. Mallat proved energy preservation for wavelet systems satisfying a technical condition on the Fourier transform of the mother wavelet, which he termed wavelet admissibility \cite[Theorem~2.6]{Mallat2012}. While Mallat explicitly identified analytic cubic spline Battle--Lemari\'e wavelets in dimension one as admissible, explicit verifications for other mother wavelets appear to be scarce. In particular, to the best of our knowledge, no mother wavelet $\psi\in L^1(\R^d)\cap L^2(\R^d)$ in dimension $d\geq2$ has been identified for which this condition has been rigorously verified. Subsequent work developed alternative sufficient conditions and quantitative energy decay estimates for various structured systems; see, for example, \cite{Waldspurger2017,wiatowski2017energy,CzajaLi2019}. More recent contributions provide comprehensive treatments of this question for classical and generalized scattering networks in Euclidean \cite{FuhrGetter2025,kolstoe2026generalized} and non-Euclidean \cite{getter2025digital} settings. Despite these advances, the following basic question has remained unresolved:

\emph{Is every Parseval filter bank whose low-pass multiplier is nonzero at the origin admissible?}

The main contribution of this paper is to resolve this question in the affirmative. We establish that universal admissibility holds for \emph{any} scattering filter bank. More precisely, every Parseval filter bank whose low-pass filter is nonvanishing at the origin is admissible: the propagated energy converges to zero for every square-integrable input, yielding a norm-preserving feature map. No analyticity, frequency localization, or geometric covering assumptions are required. We complement this qualitative statement with a universal polynomial decay estimate for Sobolev inputs. Finally, we show that these assumptions alone cannot imply exponential decay, even when all filters and the input are band-limited Schwartz functions and the low-pass multiplier is identically one near the origin.

The generality of the main result allows it to apply to scattering constructions based on Parseval filter banks of wavelet, Gabor, curvelet, and shearlet type, whenever the integrability and low-pass assumptions in Definition~\ref{def:parseval-bank} are satisfied. These families exhibit markedly different frequency geometries: wavelets resolve frequency according to scale, Gabor systems provide essentially uniform time-frequency resolution, and curvelet- and shearlet-type constructions use increasingly anisotropic directional channels. We refer to \cite{Mallat_book2009,Grochenig2001,CandesDonoho2004,GuoLabate2007} for the corresponding frame constructions. Our arguments use none of these particular geometries and depend only on the Parseval identity and the presence of a genuine low-pass channel.

\subsection{Scattering architecture and filter banks}
We now introduce the precise framework used throughout the paper, beginning with the Fourier normalization and the underlying filter-bank class. For $h\in L^1(\R^d)$, we set
\begin{equation*}
    \wh h(\xi)\coloneqq\int_{\R^d}h(x)e^{-2\pi\iu\xi\cdot x}\dx,\qquad
    \check h(x)\coloneqq\int_{\R^d}h(\xi)e^{2\pi\iu\xi\cdot x}\dxi.
\end{equation*}
With these conventions, the Fourier transform extends to a unitary automorphism of $L^2(\R^d)$. 
The class of filter banks studied in this paper is the following.
\begin{definition}\label{def:parseval-bank}
    A \emph{scattering filter bank} on $\R^d$ is a pair $(\chi,\Psi)$, where $\Psi=(\psi_\lambda)_{\lambda\in\Lambda}$ is indexed by a countable set $\Lambda$, and $\chi,\psi_\lambda\in L^1(\R^d)\cap L^2(\R^d)$ for every $\lambda\in\Lambda$, such that
    \begin{equation}\label{eq:LP-condition}
        \abs{\wh\chi(\xi)}^2+\sum_{\lambda\in\Lambda}\abs{\wh\psi_\lambda(\xi)}^2=1
    \end{equation}
    for almost every $\xi\in\R^d$, and such that $\wh\chi(0)\neq0$.
\end{definition}

The assumption that the filters belong to $L^1(\R^d)$ ensures that their Fourier transforms are continuous. In particular, $\wh\chi(0)\neq0$ implies that there exist a radius $R_\chi>0$ and a constant $c_\chi>0$ such that
\begin{equation}\label{eq:low-pass_pointwise_lower_Fourier_bound}
    \abs{\wh\chi(\xi)}\geq c_\chi \qquad \text{for every }\xi\in B_{R_\chi}(0).
\end{equation}
By contrast, the Parseval identity \eqref{eq:LP-condition} is asserted only almost everywhere. Indeed, continuity of the individual Fourier transforms does not in general imply continuity of the infinite sum appearing there. 

We briefly relate Definition~\ref{def:parseval-bank} to the standard frame formulation.
\begin{remark}
    Let $(g_i)_{i\in I}\subset L^1(\R^d)\cap L^2(\R^d)$ be a semi-discrete frame, so that
    \[A\norm{f}_2^2\leq\sum_{i\in I}\norm{f*g_i}_2^2\leq B\norm{f}_2^2\]
    for every $f\in L^2(\R^d)$ and some $0<A\leq B<\infty$. By Plancherel's theorem, this is equivalent to
    \[A\leq m(\xi)\coloneqq\sum_{i\in I}\abs{\wh g_i(\xi)}^2\leq B\]
    for almost every $\xi\in\R^d$. The associated Parseval normalization is formally given by
    \begin{equation}\label{eq:parseval-normalization}
        \wh{g_i^\circ}(\xi)\coloneqq m(\xi)^{-1/2}\wh g_i(\xi),\qquad i\in I,
    \end{equation}
    for almost every $\xi\in\R^d$. Since $m$ is almost everywhere bounded above and bounded away from zero, each $g_i^\circ$ belongs to $L^2(\R^d)$ and
    \[\sum_{i\in I}\abs{\wh{g_i^\circ}(\xi)}^2=1\]
    for almost every $\xi\in\R^d$. Whenever the normalized filters additionally belong to $L^1(\R^d)$ and some $i_0\in I$ satisfies $\wh{g_{i_0}^\circ}(0)\neq0$, setting $\chi=g_{i_0}^\circ$ and $\Psi=(g_i^\circ)_{i\in I\setminus\{i_0\}}$ produces a scattering filter bank. In the special case of a tight frame, where $A=B$, the normalization reduces to $g_i^\circ=A^{-1/2}g_i$ and therefore preserves the $L^1$-regularity of the filters.
\end{remark}

Fix a scattering filter bank $(\chi,\Psi)$ on $\R^d$, with $\Psi=(\psi_\lambda)_{\lambda\in\Lambda}$. A path of length $n\geq 1$ is a tuple
\[p=(\lambda_1,\ldots,\lambda_n)\in\PathDepth{n}.\]
We set $\PathDepth{0}\coloneqq\{e\}$, where $e$ denotes the empty path, and write
\[\PathsAll\coloneqq\bigcup_{n\geq0}\PathDepth{n}.\]
For $f\in L^2(\R^d)$, let $U[e]f\coloneqq f$. For $\lambda\in\Lambda$, define
\[U[\lambda]f\coloneqq\abs{f*\psi_\lambda},\]
and, for $p=(\lambda_1,\ldots,\lambda_n)\in\PathDepth{n}$ with $n\geq1$, set
\[U[p]f\coloneqq U[\lambda_n]\cdots U[\lambda_1]f.\]
Young's convolution inequality shows that $U[p]f\in L^2(\R^d)$ for every path $p\in\PathsAll$. The scattering coefficient associated with $p$ is
\[\Scat[p]f\coloneqq (U[p]f)*\chi.\]
For a path set $P\subset\PathsAll$, we use the notation
\begin{equation*}
    U[P]f\coloneqq\bigl(U[p]f\bigr)_{p\in P},\qquad \ScatSet{P}f\coloneqq\bigl(\Scat[p]f\bigr)_{p\in P},\qquad U_nf\coloneqq U[\PathDepth{n}]f.
\end{equation*}
For $N\in\Nzero$, let
\[\PathsLe{N}\coloneqq\bigcup_{0\leq n\leq N}\PathDepth{n}.\]
The full, depth-$N$-truncated, and $N$th-layer scattering transforms are defined by
\begin{equation*}
    \Scat f\coloneqq\ScatSet{\PathsAll}f,\qquad \ScatLe{N}f\coloneqq\ScatSet{\PathsLe{N}}f,\qquad \ScatEq{N}f\coloneqq\ScatSet{\PathDepth{N}}f.
\end{equation*}
When no ambiguity can arise, we identify an index $\lambda\in\Lambda$ with its associated filter $\psi_\lambda$ in path notation; in particular, we also write $U[\psi_\lambda]$ for $U[\lambda]$ and use the analogous notation for iterated propagators.

To state the fundamental layerwise energy decomposition, define the energy extracted at depth $N$ by
\[A_N(f)\coloneqq\norm{\ScatEq{N}f}_{\ell^2(\PathDepth{N};L^2(\R^d))}^2=\sum_{p\in\PathDepth{N}}\norm{\Scat[p]f}_2^2\]
and the residual energy at depth $N$ by
\[W_N(f)\coloneqq\norm{U_Nf}_{\ell^2(\PathDepth{N};L^2(\R^d))}^2=\sum_{p\in\PathDepth{N}}\norm{U[p]f}_2^2.\]
The Parseval identity \eqref{eq:LP-condition}, Plancherel's theorem, and the identity $\norm{\abs{g}}_2=\norm{g}_2$ then yield
\begin{equation}\label{eq:layerwise-energy-identity}
    A_N(f)=W_N(f)-W_{N+1}(f).
\end{equation}
In particular, $(W_n(f))_{n\in\Nzero}$ is nonincreasing and nonnegative, so its limit $W_\infty(f)\geq 0$ exists. For later reference, let us note that this entails the asymptotic vanishing 
\begin{equation}\label{eq:lim-A_N}
    \lim_{N\to\infty} A_N(f)=0.
\end{equation}
The layerwise energy decomposition also allows us to quantify the cumulative energy of the scattering coefficients up to order $N$ in terms of the residual energy by summing \eqref{eq:layerwise-energy-identity}, which yields 
\begin{equation}\label{eq:finite-energy-telescoping}
    \norm{\ScatLe{N}f}_{\ell^2(\PathsLe{N};L^2(\R^d))}^2=\sum_{n=0}^N\sum_{p\in\PathDepth{n}}\norm{\Scat[p]f}_2^2=\sum_{n=0}^NA_n(f)=\norm{f}_2^2-W_{N+1}(f).
\end{equation}
Thus the full scattering transform preserves the norm precisely when the energy propagated to arbitrarily deep layers vanishes.

\begin{definition}\label{def:admissibility}
    A scattering filter bank $(\chi,\Psi)$ is called \emph{admissible} if
    \[\lim_{N\to\infty}\norm{U_Nf}_{\ell^2(\PathDepth{N};L^2(\R^d))}=0\]
    for every $f\in L^2(\R^d)$.
\end{definition}
This terminology refers to the vanishing-tail property itself and should be distinguished from the sufficient wavelet condition termed admissibility in \cite[Theorem~2.6]{Mallat2012}.

\subsection{Main results}

Our first result shows that every scattering filter bank in the sense of Definition~\ref{def:parseval-bank} is admissible.

\begin{theorem}\label{thm:intro-universal-admissibility}
    Let $(\chi,\Psi)$ be a scattering filter bank on $\R^d$. Then $(\chi,\Psi)$ is admissible. More precisely, for every $f\in L^2(\R^d)$,
    \[\lim_{N\to\infty}\norm{U_Nf}_{\ell^2(\PathDepth{N};L^2(\R^d))}=0.\]
    Consequently,
    \begin{equation*}
        \norm{\Scat f}_{\ell^2(\PathsAll;L^2(\R^d))}^2=\sum_{n=0}^\infty\sum_{p\in\PathDepth{n}}\norm{\Scat[p]f}_2^2=\norm{f}_2^2,
    \end{equation*}
    so $\Scat$ is norm-preserving from $L^2(\R^d)$ into $\ell^2(\PathsAll;L^2(\R^d))$.
\end{theorem}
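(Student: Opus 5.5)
The plan is to reduce to a dense class of regular inputs and then exploit a second monotone quantity besides $W_N$, namely the gradient energy. \textbf{Reduction.} Since $\abs{\abs{a}-\abs{b}}\leq\abs{a-b}$ and \eqref{eq:LP-condition} holds, the map $f\mapsto U_Nf$ is nonexpansive from $L^2(\R^d)$ into $\ell^2(\PathDepth{N};L^2(\R^d))$ for each $N$. Hence
\[
\abs{W_N(f)^{1/2}-W_N(h)^{1/2}}\leq\norm{f-h}_2
\]
uniformly in $N$. It therefore suffices to prove $W_N(f)\to0$ for $f$ in the dense subspace $H^1(\R^d)$. The norm identity then follows at once from \eqref{eq:finite-energy-telescoping} by letting $N\to\infty$.

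\textbf{Gradient energy.} For $f\in H^1$, set
\[
D_N(f)\coloneqq\sum_{p\in\PathDepth{N}}\norm{\nabla U[p]f}_2^2 .
\]
The pointwise inequality $\abs{\nabla\abs{g}}\leq\abs{\nabla g}$ together with Plancherel and \eqref{eq:LP-condition} gives
\[
D_{N+1}\leq\sum_{p\in\PathDepth{N}}\int\abs{2\pi\xi}^2\bigl(1-\abs{\wh\chi(\xi)}^2\bigr)\abs{\wh{U[p]f}(\xi)}^2\dxi .
\]
Thus $D_N$ is nonincreasing and bounded by $\norm{\nabla f}_2^2$. Splitting each $\abs{\wh{U[p]f}}^2$ into frequencies inside and outside $B_{R}(0)$, with $R\leq R_\chi$, and using \eqref{eq:low-pass_pointwise_lower_Fourier_bound}, I get
\[
W_N\leq c_\chi^{-2}A_N+(2\pi R)^{-2}D_N .
\]
Since $A_N\to0$ by \eqref{eq:lim-A_N}, this shows that any surviving energy $W_\infty>0$ must sit at frequencies $\abs{\xi}\gtrsim R_\chi$ while its gradient energy stays bounded. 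It does not yet close the argument.

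\textbf{The crux.} The main obstacle is to show that the modulus forces low-frequency content. Every $u=U[p]f$ with $p\neq e$ is nonnegative, so $\wh u(0)=\norm{u}_1\geq\abs{\wh u(\xi)}$ and $\operatorname{Re}\wh u$ peaks at the origin. I would try to prove a quantitative lower bound of the form
\[
\int_{B_r}\abs{\wh u}^2\gtrsim F\bigl(\norm{u}_2,\norm{\nabla u}_2,r\bigr)
\]
for nonnegative $u\in H^1$. It would be paired with a bookkeeping argument over depths: summing the dissipation $A_N$ and the gradient dissipation $D_N-D_{N+1}$, both of which are summable in $N$, should force $W_N\to0$. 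I expect the $d$-dependence in the later $N^{-1/d}$ rate to enter exactly through this step, via volume counting $\abs{B_r}\sim r^d$.

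If a direct uncertainty inequality for nonnegative functions proves too weak, the fallback is a contradiction argument. Assume $W_\infty>0$ and use $A_N\to0$ to extract depths at which the low-pass share of every nonnegative piece is negligible. Then show that the bounded $D_N$ and the nonnegativity (positive-definiteness of $\wh u$ up to mass) are incompatible with the energy escaping entirely to $\abs{\xi}\geq R_\chi$.
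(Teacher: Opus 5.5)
\textbf{There is a genuine gap.} Your preliminary steps are correct. The nonexpansiveness of $f\mapsto U_Nf$ justifies the reduction to $H^1(\R^d)$. The gradient energy $D_N$ is nonincreasing with $D_N\le\norm{\nabla f}_2^2$, and the splitting $W_N\le c_\chi^{-2}A_N+(2\pi R)^{-2}D_N$ holds. But, as you note, this does not close, and what remains is the entire substance of the theorem. You need an inequality forcing a nonnegative $u$ with controlled $\norm{\nabla u}_2/\norm{u}_2$ to place a definite share of its $L^2$-energy in $B_{R_\chi}(0)$, where $\abs{\wh\chi}\ge c_\chi$. You only announce this inequality, and your fallback contradiction scheme just restates the goal. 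The mechanism you point to, $\abs{\wh u(\xi)}\le\wh u(0)=\norm{u}_1$, involves the wrong quantity. $\norm{U[p]f}_1$ is not controlled by the $L^2$ energy bookkeeping (it need not even be finite for $f\in L^2$), so it cannot produce a bound in terms of $\norm{u}_2$ and $\norm{\nabla u}_2$.

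\textbf{The missing idea.} Use nonnegativity of the autocorrelation $x\mapsto\ip{u}{T_xu}=\int e^{2\pi\iu x\cdot\xi}\abs{\wh u(\xi)}^2\dxi$, not of $\wh u$, and test it against a nonnegative weight with compact spectrum. Take $w=\abs{\check a}^2$ with $0\le a\in C_c(B_{r_0}(0))$, $a\neq0$, and $2r_0<R_\chi$. Then $w\ge0$, $w\ge\kappa>0$ on some ball $B_{r_\chi}(0)$, and $\wh w=a*a(-\,\cdot)\ge0$ is supported in $B_{R_\chi}(0)$, so $\wh w\le C_\chi\abs{\wh\chi}^2$. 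For $u\ge0$ one has $\ip{u}{T_xu}\ge0$ for every $x$, so the region where $w$ is small can be discarded. Moreover $\ip{u}{T_xu}=\norm{u}_2^2-\tfrac12\norm{u-T_xu}_2^2\ge\norm{u}_2^2-\tfrac12\abs{x}^2\norm{\nabla u}_2^2$. Hence, for $0<r\le r_\chi$,
\[
\kappa\abs{B_r(0)}\bigl(\norm{u}_2^2-\tfrac12 r^2\norm{\nabla u}_2^2\bigr)\le\int_{\R^d}w(x)\ip{u}{T_xu}\dx=\int_{\R^d}\wh w(\xi)\abs{\wh u(\xi)}^2\dxi\le C_\chi\norm{u*\chi}_2^2,
\]
which is exactly your desired $F$. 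Sum over $p\in\PathDepth{N}$ and use only $D_N\le\norm{\nabla f}_2^2$; no bookkeeping of $D_N-D_{N+1}$ is needed. This gives $W_N\lesssim r^{-d}A_N+r^2\norm{\nabla f}_2^2$, and letting $N\to\infty$ and then $r\to0$ completes your argument.

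With this lemma your route becomes a valid $H^1$ variant of the paper's proof. The paper avoids the density step by working directly on $L^2$, using $\sum_{p}\norm{T_xU[p]f-U[p]f}_2^2\le\norm{T_xf-f}_2^2$ together with strong continuity of translations.
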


The theorem eliminates the need for any auxiliary frequency-side assumptions, whether on the input signal or on the underlying filters, that appeared in previous energy-conservation results. The only local information required about the filter bank is the lower bound \eqref{eq:low-pass_pointwise_lower_Fourier_bound} on the output-generating low-pass filter; the remaining filters may have arbitrary and mutually unrelated frequency supports, subject only to the Parseval identity.

The central observation is that every propagated signal $U[p]f$ associated with a nonempty path is nonnegative. Its autocorrelation $\ip{U[p]f}{T_xU[p]f}$ is therefore nonnegative, so it cannot be reduced by oscillatory cancellation. More precisely, one constructs a nonnegative function $w\in L^1(\R^d)\cap C_0(\R^d)$, bounded from below near the origin, such that
\[\int_{\R^d}w(x)\ip{h}{T_xh}\dx\lesssim_\chi\norm{h*\chi}_2^2\]
for every nonnegative $h\in L^2(\R^d)$. Since convolution and modulus are translation covariant and nonexpansive in $L^2$, persistence of a fixed amount of residual energy would imply a uniform lower bound for the corresponding weighted correlations. The low-pass energy extracted at those depths would therefore remain bounded away from zero, in contradiction with \eqref{eq:lim-A_N}.

A quantitative version of this argument yields an explicit decay rate when the input has Sobolev regularity.

\begin{corollary}\label{cor:intro-Hs-polynomial-decay}
    Let $(\chi,\Psi)$ be a scattering filter bank on $\R^d$, and let $s>0$. Then there exists a constant $C>0$, depending only on $s$, $d$, and the low-pass filter $\chi$, such that
    \begin{equation*}
        \norm{U_Nf}_{\ell^2(\PathDepth{N};L^2(\R^d))}\leq C \norm{f}_{H^s(\R^d)} N^{-\frac{\min\{s,1\}}{d}}
    \end{equation*}
    for every $f\in H^s(\R^d)$ and every $N\in\N$.
\end{corollary}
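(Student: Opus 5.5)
The plan is to turn the qualitative mechanism described after Theorem~\ref{thm:intro-universal-admissibility} into a one-step recursive inequality of the form
\begin{equation*}
    A_n(f)\;\geq\; c\,\min\Bigl\{W_n(f),\;W_n(f)^{1+\frac{d}{2\sigma}}\,E^{-\frac{d}{2\sigma}}\Bigr\},\qquad \sigma\coloneqq\min\{s,1\},\quad E\coloneqq\norm{f}_{H^\sigma}^2 .
\end{equation*}
Combined with $W_{n+1}=W_n-A_n$ from \eqref{eq:layerwise-energy-identity}, a discrete comparison with the ODE $W'=-cW^{1+d/(2\sigma)}E^{-d/(2\sigma)}$ will give $W_N\lesssim E\,N^{-2\sigma/d}$, which is exactly the claimed bound on $\norm{U_Nf}$. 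In the regime where the minimum equals $W_n$, the decay is geometric and is absorbed into the same estimate. Throughout I replace $s$ by $\sigma$, which is allowed since $\norm{f}_{H^\sigma}\le\norm{f}_{H^s}$.

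\emph{Step 1: translation regularity propagates.} I will use that $U_n=U[\PathDepth{n}]$ is nonexpansive and translation covariant. For any $f,g\in L^2$, the Parseval identity (with $\sum_\lambda\abs{\wh\psi_\lambda}^2\le1$) and $\abs{\abs a-\abs b}\le\abs{a-b}$ give
\[
\sum_{p\in\PathDepth{n}}\norm{U[p]f-U[p]g}_2^2\le\norm{f-g}_2^2 .
\]
Taking $g=T_xf$ and using $\norm{f-T_xf}_2^2\lesssim\abs{x}^{2\sigma}E$ for $\sigma\le1$, I obtain for $h_p=U[p]f$, $p\in\PathDepth n$, $n\ge1$:
\[
\sum_p\ip{h_p}{T_xh_p}=W_n-\tfrac12\sum_p\norm{h_p-T_xh_p}_2^2\;\geq\; W_n-C\abs{x}^{2\sigma}E .
\]

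\emph{Step 2: nonnegativity turns local correlation into low-frequency mass.}
\begin{itemize}
    \item Fix a radial $\phi=\eta*\tilde\eta\ge0$ with $\wh\phi=\abs{\wh\eta}^2\ge0$ supported in $B_1(0)$, and set $g_r=r^{-d}\phi(\cdot/r)$. Since each $h_p\ge0$, every autocorrelation $\ip{h_p}{T_xh_p}$ is nonnegative. Hence I may discard $\abs x>r$ and use Step 1 on $\abs x\le r$.
    \item Choosing $r$ with $Cr^{2\sigma}E=W_n/2$ gives
    \[
    \sum_p\int_{B_{1/r}}\abs{\wh h_p}^2\;\ge\;\sum_p\int\wh\phi(r\xi)\abs{\wh h_p(\xi)}^2\dxi\;=\;\sum_p\int g_r(x)\ip{h_p}{T_xh_p}\dx\;\gtrsim\; W_n .
    \]
    \item Next comes the key positivity lemma. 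If $h\ge0$, then $\abs{\wh h}^2=\wh a$ with $a=h*\tilde h\ge0$. Take $\theta\ge0$ with $\wh\theta\ge0$ supported in $B_{R_\chi}$ and $\wh\theta\ge1$ on $B_{R_\chi/2}$. Then for every $\xi_0$,
    \[
    \int\wh\theta(\xi-\xi_0)\abs{\wh h(\xi)}^2\dxi=\int a\,\theta\,e^{2\pi\iu\xi_0\cdot x}\dx\le\int a\,\theta=\int\wh\theta\,\abs{\wh h}^2\le c_\chi^{-2}\norm{\wh\theta}_\infty\norm{h*\chi}_2^2 ,
    \]
    using \eqref{eq:low-pass_pointwise_lower_Fourier_bound} in the last step.
    \item Covering $B_{1/r}$ by $\lesssim(rR_\chi)^{-d}$ balls of radius $R_\chi/2$ yields $\int_{B_{1/r}}\abs{\wh h}^2\lesssim_\chi(1+r^{-d})\norm{h*\chi}_2^2$.
\end{itemize}
Summing over $p$ gives $A_n\gtrsim_\chi\min\{1,r^d\}\,W_n$. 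Inserting $r\simeq(W_n/E)^{1/(2\sigma)}$ produces the recursive inequality above.

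\emph{Step 3: iteration.} This is routine. With $\beta=d/(2\sigma)$ and $u_n=W_n/E\le1$, the inequality $u_{n+1}\le u_n-cu_n^{1+\beta}$ implies $u_{n+1}^{-\beta}\ge u_n^{-\beta}+c'$, hence $u_N\lesssim N^{-1/\beta}$. Taking square roots gives $\norm{U_Nf}\le C\norm f_{H^s}N^{-\sigma/d}$.

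I expect the main obstacle to be Step 2, namely the positivity lemma that converts correlation mass at the fine frequency scale $1/r$ into low-pass energy at the fixed scale $R_\chi$. This is precisely where nonnegativity of $U[p]f$ is essential, and where the factor $r^d$ (hence the exponent $1/d$) enters. One must check that the constants depend only on $d$, $s$ and $\chi$ (through $R_\chi$ and $c_\chi$), and handle the case $r\ge1$ separately.
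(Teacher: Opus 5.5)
Your argument is correct. Its outer structure is the same as the paper's: the translation modulus propagates through the nonexpansive, translation-covariant propagators, this yields the dissipation inequality $A_n\gtrsim E^{-d/(2\sigma)}W_n^{1+d/(2\sigma)}$, and one telescopes $W_n^{-d/(2\sigma)}$. Where you differ is in how the low-pass energy is made to control the correlation mass at the small scale $r$.

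The paper keeps the single fixed weight $w$ of Lemma~\ref{lem:correlation}, which satisfies $w\geq\kappa$ on $B_{r_\chi}(0)$, and simply restricts it to $B_r(0)$ for $r\leq r_\chi$. Pointwise nonnegativity of $I_N$ then gives $\kappa\int_{B_r(0)}I_N\leq C_\chi A_N$. The factor $r^{-d}$ is just the volume of the spatial ball, which gives $W_N\lesssim r^{2s}\norm{f}_{H^s(\R^d)}^2+r^{-d}A_N$, followed by a two-case optimization according to whether the balancing scale exceeds $r_\chi$.

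You instead dilate the weight to scale $r$, which turns the correlation mass into spectral mass on $B_{1/r}(0)$. You then prove a frequency-side positivity lemma: for $h\geq0$, the bump-averaged mass $\int\wh\theta(\xi-\xi_0)\abs{\wh h(\xi)}^2\dxi$ is largest at $\xi_0=0$, because $h*\tilde h\geq0$ and $\theta\geq0$. Combining this with a covering of $B_{1/r}(0)$ by $\lesssim(rR_\chi)^{-d}$ low-pass cells gives the same bound. Both routes rest on the same fact, the pointwise nonnegativity of the autocorrelation of a nonnegative function; the paper uses it in space and you use it in frequency.

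The trade-off is as follows. The paper's version is shorter: it reuses the lemma already needed for Theorem~\ref{thm:intro-universal-admissibility} and needs no second bump $\theta$ and no covering. Your version makes explicit that the exponent $1/d$ is a count of frequency cells. It also isolates a reusable statement: a nonnegative signal cannot carry energy near any frequency $\xi_0$ without comparable energy near the origin. Finally, it handles all scales at once through the factor $\min\{1,r^d\}$ instead of the restriction $r\leq r_\chi$.

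One small point: your recursion needs $n\geq1$ for nonnegativity of $U[p]f$. So the telescoping starts at $W_1$, and the case $N=1$ follows from $W_1\leq\norm{f}_2^2$, exactly as in the paper.
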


Notably, Corollary~\ref{cor:intro-Hs-polynomial-decay} is uniform with respect to the high-pass family $\Psi$. Once the Parseval condition is imposed, the decay constant depends only on the dimension, the input regularity, and the low-pass filter. The exponent is obtained by combining the correlation argument with the standard translation estimate
\begin{equation*}
    \norm{f-T_xf}_2\lesssim_{s,d}\abs{x}^{\min\{s,1\}}\norm{f}_{H^s(\R^d)}
\end{equation*}
at small spatial scales and optimizing the resulting scale-dependent estimate for the residual energy. The saturation at $s=1$ reflects that the proof uses a first-order translation modulus; no additional universal gain is asserted for regularities above one.

For particular filter geometries, substantially faster decay is known. Waldspurger related high-order wavelet scattering energy to the high-frequency tail of the input and obtained exponential estimates for suitable one-dimensional wavelet systems and Sobolev inputs \cite{Waldspurger2017}. Exponential decay for time-frequency scattering associated with uniform covering frames was established by Czaja and Li \cite{CzajaLi2019}, while Wiatowski, Grohs, and Bölcskei derived polynomial and, for certain wavelet filter banks, exponential bounds under additional analyticity and high-pass assumptions \cite{wiatowski2017energy}. More recently, Führ and the present author developed a unifying framework for these earlier results that also encompasses other important classes of filter banks, including novel wavelet filter banks, as well as higher-dimensional extensions. Within this framework, fast, potentially exponential, energy decay can be guaranteed for signals belonging to a generalized Sobolev class adapted to the underlying filter bank \cite{FuhrGetter2025}. These results naturally raise the question whether exponential energy decay might follow automatically once the input is sufficiently regular.

Our third result answers this question negatively in the strongest natural regularity class considered here.

\begin{theorem}\label{thm:intro-no-universal-exponential-decay}
    There exist a scattering filter bank $(\chi,\Psi)$ of band-limited Schwartz functions on $\R$ and a band-limited Schwartz function $f\in\calS(\R)$ such that
    \[\wh\chi(\xi)=1\]
    on a neighborhood of the origin, while
    \begin{equation*}
        \limsup_{N\to\infty}\norm{U_Nf}_{\ell^2(\PathDepth{N};L^2(\R))}^{1/N}=1.
    \end{equation*}
\end{theorem}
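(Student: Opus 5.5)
The plan is to build the counterexample so that essentially all propagated energy follows a single distinguished chain of paths, and so that along this chain the fraction of energy captured by the low-pass filter at depth $n$ is a prescribed sequence $\eps_n\to0$ that is not summable, for example $\eps_n\asymp 1/n$. The layerwise identity \eqref{eq:layerwise-energy-identity} then gives
\[
W_{N+1}(f)\approx W_1(f)\prod_{n\le N}(1-\eps_n).
\]
With $\eps_n\asymp1/n$ this behaves like a power of $N^{-1}$. Hence $W_N^{1/N}\to1$, which gives the $\limsup$ claim (applied to the square root $\norm{U_Nf}$), while Theorem~\ref{thm:intro-universal-admissibility} remains consistent because $W_N\to0$. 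The guiding heuristic comes from the discussion after Theorem~\ref{thm:intro-universal-admissibility}. For a nonnegative $h$, the low-pass energy $\norm{h*\chi}_2^2$ is essentially the energy of $\wh h$ near $0$. A nonnegative function can still carry a tiny fraction of its energy there if it looks like a train of narrow spikes: for a periodic spike train of duty cycle $\delta$, the mean carries only an $O(\delta)$ fraction of the $L^2$ energy.

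\textbf{Building blocks.} For $n\ge1$ I fix nonnegative trigonometric polynomials $K_n$, for instance normalized Fejér or Jackson kernels of degree $m_n\to\infty$. These satisfy $K_n\ge0$, and their constant Fourier coefficient has squared modulus about $1/m_n$ times $\norm{K_n}_{L^2(\T)}^2$. I dilate $K_n$ by a large period $D_n$ and multiply by a slowly varying Schwartz envelope $\varphi_n$ with band-limited Fourier transform. The result is a band-limited Schwartz function $h_n=\varphi_n\cdot K_n(\cdot/D_n)$. Its spectrum consists of narrow bumps at the points $k/D_n$ with $\abs{k}\le m_n$. Only the bump at $k=0$ meets $B_{R_\chi}(0)$, provided the spacing $1/D_n$ exceeds $R_\chi$ plus the envelope bandwidth. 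I take $\wh\chi$ smooth and compactly supported with $\wh\chi=1$ near $0$, so the depth-$n$ low-pass extraction takes exactly the $k=0$ bump, an $\eps_n$-fraction of the energy.

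\textbf{Choice of the filters.} The chain is arranged so that $U[\psi_{n}]\,h_{n-1}\approx h_n$ up to errors whose energies are summable. Since $h_{n-1}\ge0$, its non-DC part $h_{n-1}-\varphi_{n-1}\widehat{K_{n-1}}(0)$ is a real band-limited oscillating function. A band-pass filter $\psi_n$ with $\wh\psi_n=\sqrt{1-\abs{\wh\chi}^2}$ on this non-DC spectrum passes it unchanged. Taking the modulus then produces a new nonnegative spike-like function.

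\textbf{Main obstacle.} The hardest step is to make the modulus map the family $(h_n)$ into itself, exactly or approximately, while ensuring that the induced DC fractions $\eps_n$ are about $1/n$ and are not bounded below. I would first try carrier signals of the form $h_n=\abs{\varphi\cdot\cos(2\pi\omega_n x)}^{\alpha}$-type constructions. If that fails, I would use a genuinely multi-scale design in which each $\psi_n$ is a modulated smooth bump. This bump would extract one spectral bump of the spike train together with its neighbours, so that the modulus recreates a spike train with a sharper duty cycle.

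\textbf{Assembling the Parseval bank.} The filters $\psi_n$ are all made Schwartz with compactly supported Fourier transforms. I complete them to a Parseval bank by adding smooth partition-of-unity filters that cover the remaining frequencies, using square roots of a smooth partition of unity. These extra channels can only add further nonnegative terms to $W_N$, so they do not harm the lower bound.

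The lower bound on $W_N$ is then obtained by keeping only the chain path $(\psi_1,\dots,\psi_N)$. Since the errors are controlled in $L^2$ and convolution with the filters and the modulus are nonexpansive, the perturbations along the chain stay summable. This yields the estimate
\[
W_N(f)\ge c\prod_{n\le N}(1-\eps_n)\gtrsim N^{-C}.
\]
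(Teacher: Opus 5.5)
There is a genuine gap. The step you call the main obstacle is the entire construction, and it is not carried out. You never exhibit filters $\psi_n$, nonnegative profiles $h_n$, and an input $f$ with $U[\psi_1]f\approx h_1$, $U[\psi_n]h_{n-1}\approx h_n$ and low-pass fractions $\eps_n\asymp1/n$; you only list constructions you would try. Heuristically, the mechanism you do describe does not drive $\eps_n$ to zero. Take $h_{n-1}=\varphi\,K(\cdot/D)$ with $\varphi\geq0$, and $K\geq0$ of mean one concentrated on a set of small measure, and let $\psi_n$ pass exactly the non-DC bumps. Then $U[\psi_n]h_{n-1}=\varphi\,\abs{K-1}(\cdot/D)$. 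Now $\abs{K-1}$ is roughly $K-1$ on the spikes and roughly $1$ elsewhere, so its mean is about $2$, while its energy is $\norm{K}_{L^2(\T)}^2-1$. The DC fraction therefore roughly quadruples instead of shrinking, and further iterations plausibly keep it bounded below while eroding the spikes. Nothing in this mechanism sharpens the duty cycle. Moreover, $\varphi\,\abs{K-1}(\cdot/D)$ is not real-analytic at the sign changes of $K-1$, hence not band-limited. So no later compactly supported multiplier passes its whole non-DC spectrum, and errors are unavoidable.

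These errors break your final accounting, and summability does not rescue it. Nonexpansiveness only yields $\norm{U[\psi_1,\dots,\psi_N]f-h_N}_2\leq\sum_{n\leq N}e_n$, hence $\norm{U[\psi_1,\dots,\psi_N]f}_2\geq\norm{h_N}_2-\sum_{n\leq N}e_n$. Here $\norm{h_N}_2\to0$ by design, as it must be in view of Theorem~\ref{thm:intro-universal-admissibility}. Meanwhile $\sum_{n\leq N}e_n$ is nondecreasing and positive as soon as one $e_n$ is, so the bound is vacuous for all large $N$. A single chain can certify energy only on a finite horizon on which the accumulated error stays below the target norm.

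The paper builds such horizons at infinitely many disjoint scales and never asks the modulus to reshape a profile. If $g\geq0$ and $\norm{g*\psi-g}_2\leq\delta\norm{g}_2$, then $\norm{U[\psi]g-g}_2=\norm{\abs{g*\psi}-\abs{g}}_2\leq\delta\norm{g}_2$. Nonexpansiveness bounds every later increment by this first one, so $\norm{(U[\psi])^{(m)}g}_2\geq(1-m\delta)\norm{g}_2$, and one stops at $m\asymp\delta^{-1}$. The paper applies this to $g_j=U[\psi_\ast,\psi_{e_j}]f$, a phase-aligned, nearly flat block of Fourier coefficients of $\abs{\cos(2\pi\,\cdot)}$ that approximates $\abs{c_{N_j}}\phi\,\abs{D_{L_j}}(2\,\cdot)$. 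This gives $\delta_j=C_3A_j^{-1}$ for the channel $\psi_{r_j}$ and $\norm{g_j}_2\gtrsim A_j^{-8}$. Each $j$ gets a separate path of depth $m_j+2$ with $m_j\asymp A_j$, and this subsequence of depths suffices for the $\limsup$. Your spike trains could play the role of $g_j$, since keeping all non-DC bumps of a Fej\'er train of degree $m$ gives $\delta$ of order $m^{-1/2}$. That works only inside such a multi-scale, finite-horizon scheme, and only with a single input that produces them at every scale with at most polynomial loss in $\delta^{-1}$.
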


Thus, even for a band-limited Schwartz input and an ideal low-pass response near the origin, the propagated energy need not satisfy any asymptotic estimate of the form $\calO(\alpha^N)$ with $\alpha<1$. Corollary~\ref{cor:intro-Hs-polynomial-decay} therefore cannot be replaced, at this level of generality, by a universal exponential bound. This obstruction is complementary to the arbitrarily slow energy propagation established for wavelet scattering on dense subsets of $L^2(\R^d)$ in \cite{FuhrGetter2025}: there the filter bank is constrained by a prescribed wavelet geometry while the constructed examples have very weak regularity, whereas Theorem~\ref{thm:intro-no-universal-exponential-decay} constructs the filter bank so that subexponential behavior already occurs for one fixed band-limited Schwartz function.

The counterexample combines classical Fourier techniques with the scattering architecture. Specially designed channels isolate long, nearly flat blocks of Fourier coefficients of $\abs{\cos(2\pi\cdot)}$. After phase alignment, these blocks produce functions with the profile of the modulus of a Dirichlet kernel, whose energy is concentrated in a prescribed intermediate frequency range. Repeated application of the corresponding single-step propagator preserves a fixed proportion of the energy over arbitrarily many layers, yielding decay slower than any exponential rate.

\subsection{Outline}
The remainder of this paper is organized as follows. Section~\ref{sec:universal-admissibility} proves Theorem~\ref{thm:intro-universal-admissibility}, establishing the admissibility of arbitrary scattering filter banks. Section~\ref{sec:a-universal-Sobolev-decay-estimate} derives the generic polynomial decay bound stated in Corollary~\ref{cor:intro-Hs-polynomial-decay}. Finally, Section~\ref{sec:failure-of-universal-exponential-decay} constructs the counterexample to exponential energy decay, thereby establishing Theorem~\ref{thm:intro-no-universal-exponential-decay}.

\section{Universal admissibility}\label{sec:universal-admissibility}

The proof of Theorem~\ref{thm:intro-universal-admissibility} rests on the following correlation inequality, which is the only step in which the nonvanishing assumption $\wh\chi(0)\neq0$ enters. Throughout, $T_xh\coloneqq h(\cdot-x)$ denotes translation by $x\in\R^d$.

\begin{lemma}\label{lem:correlation}
    Let $(\chi,\Psi)$ be a scattering filter bank on $\R^d$. Then there exist a radius $r_\chi>0$, constants $\kappa,C_\chi>0$, and a function $w\in L^1(\R^d)\cap C_0(\R^d)$ such that $w\geq0$ on $\R^d$,
    \[w(x)\geq\kappa\qquad\text{for every }x\in B_{r_\chi}(0),\]
    and
    \[\int_{\R^d}w(x)\ip{h}{T_xh}\dx\leq C_\chi\norm{h*\chi}_2^2\]
    for every nonnegative $h\in L^2(\R^d)$.
\end{lemma}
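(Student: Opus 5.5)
Take $w\coloneqq\phi*\tilde\phi$, the autocorrelation of a smooth bump: fix a nonnegative $\phi\in C_c^\infty(\R^d)$ with $\supp\phi\subset B_{\rho}(0)$, $\int\phi=1$, and set $\tilde\phi(x)\coloneqq\phi(-x)$. Then $w\geq0$, $w\in C_c(\R^d)\subset L^1\cap C_0$, and $\wh w=\abs{\wh\phi}^2\geq0$. Expressing the weighted correlation on the Fourier side via Plancherel gives
\[
\int_{\R^d}w(x)\ip{h}{T_xh}\dx=\int_{\R^d}\abs{\wh h(\xi)}^2\,\wh w(\xi)\dxi=\norm{h*\phi}_2^2 ,
\]
since $\ip{h}{T_xh}=\int\abs{\wh h}^2e^{-2\pi\iu x\cdot\xi}\dxi$ (up to the sign convention, which is harmless because $w$ is even). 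The inequality to prove is therefore $\norm{h*\phi}_2^2\leq C_\chi\norm{h*\chi}_2^2$ for nonnegative $h$. This cannot hold for all $h$, because $\wh\chi$ may vanish at high frequencies while $\wh\phi$ does not. Positivity of $h$ has to be used.

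I would use it through the pointwise domination available for nonnegative functions. Choose a nonnegative kernel $k\in L^1$ with $\phi\leq C\,k$ pointwise, for example $k=\phi$ itself. Then $h\geq0$ gives $0\leq h*\phi$, so $\norm{h*\phi}_2\leq\norm{h*k}_2$ whenever $k\geq\phi$. It then suffices to produce a nonnegative kernel $k\geq\phi$ with $\norm{h*k}_2\lesssim\norm{h*\chi}_2$. Take $k\coloneqq\eta*\tilde\eta$, where $\check{}$-side we design $\wh\eta$. Concretely, let $\theta\in C_c^\infty(B_{R_\chi}(0))$ and put $\wh\eta\coloneqq\theta/\wh\chi$. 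This is smooth only if $\wh\chi$ is smooth, so instead work with $g\coloneqq\eta*\chi$ where $\wh\eta=\theta\,\overline{\wh\chi}/\abs{\wh\chi}^2$. That is fine in $L^2$, and $\norm{h*\chi*\eta}_2\leq\norm{\wh\eta}_\infty\norm{h*\chi}_2\leq c_\chi^{-1}\norm{\theta}_\infty\norm{h*\chi}_2$ by \eqref{eq:low-pass_pointwise_lower_Fourier_bound}. So the bound $\norm{h*\Theta}_2\leq c_\chi^{-1}\norm{\theta}_\infty\norm{h*\chi}_2$ holds for every $h\in L^2$, where $\Theta\coloneqq\check\theta$ is a Schwartz function with $\wh\Theta$ supported in $B_{R_\chi}$.

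The key step is now to choose $\theta$ so that $\Theta$ dominates a positive bump near the origin. The better choice is to use $\abs{\Theta}^2$ instead. Set $\Theta=\check\theta$ with $\theta\in C_c^\infty(B_{R_\chi/2})$ real and even, so $\Theta$ is real with $\Theta(0)=\int\theta>0$. Define the weight directly as $w\coloneqq\abs{\Theta}^2\geq0$. This lies in $\calS\subset L^1\cap C_0$ and is $\geq\kappa$ on a small ball $B_{r_\chi}$ by continuity. Its Fourier transform is $\wh w=\theta*\tilde\theta$, supported in $B_{R_\chi}$ and bounded by $\norm{\theta}_2^2$, though not necessarily nonnegative. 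Then
\[
\int w(x)\ip{h}{T_xh}\dx=\int\abs{\wh h}^2\,\wh w\dxi\leq\norm{\theta}_2^2\int_{B_{R_\chi}}\abs{\wh h}^2\dxi\leq\norm{\theta}_2^2\,c_\chi^{-2}\norm{h*\chi}_2^2 .
\]
Only the upper bound $\wh w\leq\norm{\wh w}_\infty\mathbf 1_{B_{R_\chi}}$ is used, so the sign of $\wh w$ does not matter.

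This shows that positivity of $h$ is not even needed once $w$ is band-limited. The only real obstacle is reconciling $w\geq0$ in space, which holds since $w=\abs{\Theta}^2$, with $\supp\wh w\subset B_{R_\chi}$, which holds since $\wh w=\theta*\tilde\theta$. The squared-modulus construction resolves both, and the constants are $C_\chi=\norm{\theta}_2^2c_\chi^{-2}$, with $\kappa$ and $r_\chi$ coming from continuity of $\Theta$ at $0$.
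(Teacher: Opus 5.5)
Your final construction is correct and is essentially the paper's proof. The paper likewise sets $w=\abs{\check a}^2$ with $a$ supported in a ball of radius less than $R_\chi/2$, so that $\wh w=a*\widetilde a$ is bounded and supported in $B_{R_\chi}(0)$, where $\abs{\wh\chi}\geq c_\chi$, and then concludes via Plancherel and Fubini; the only difference is that the paper takes $a\geq0$, making $\wh w\geq0$, which as you note is unnecessary because only the upper bound $\wh w\leq\norm{\wh w}_\infty\mathbf 1_{B_{R_\chi}(0)}$ is used. The earlier exploratory paragraphs (the compactly supported $w=\phi*\tilde\phi$, the domination kernel $k$, and the $\eta$-construction) play no role in the argument and should be deleted.
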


\begin{proof}
    Recall from \eqref{eq:low-pass_pointwise_lower_Fourier_bound} that there exist a radius $R_\chi>0$ and a constant $c_\chi>0$ such that
    \begin{equation*}
        \abs{\wh\chi(\xi)}\geq c_\chi \qquad \text{for every } \xi\in B_{R_\chi}(0).
    \end{equation*}
    Let $r_0\in(0,R_\chi/2)$. Choose a nonzero function $a\in C_c(\R^d)$ satisfying $a\geq0$ and $\supp a\subset B_{r_0}(0)$, and define $w$ in the Fourier domain by
    \begin{equation*}
        \widetilde a=\overline{a(-\,\cdot)}=a(-\,\cdot),\qquad \wh w=a*\widetilde a.
    \end{equation*}
    Then $\wh w\in C_c(\R^d)$ is nonnegative and even, with
    \[\supp\wh w\subset\overline{B_{2r_0}(0)}\subset B_{R_\chi}(0).\]
    Moreover,
    \[w=\check a\check{\widetilde a}=\abs{\check a}^2\geq0.\]
    Since $a\in L^2(\R^d)$, Plancherel's theorem gives $\check a\in L^2(\R^d)$, and hence $w=\abs{\check a}^2\in L^1(\R^d)$. Moreover, $\wh w\in C_c(\R^d)\subset L^1(\R^d)\cap L^\infty(\R^d)$, so Fourier inversion yields $w\in C_0(\R^d)$. Finally,
    \[w(0)=\abs{\int_{\R^d}a(\xi)\dxi}^2>0.\]
    By continuity of $w$, there exist $r_\chi,\kappa>0$ such that
    \[w(x)\geq\kappa \qquad \text{for every } x\in B_{r_\chi}(0).\]
    Since $\supp\wh w\subset B_{R_\chi}(0)$, the pointwise lower bound \eqref{eq:low-pass_pointwise_lower_Fourier_bound} implies that
    \[\wh w(\xi)\leq C_\chi\abs{\wh\chi(\xi)}^2\]
    for every $\xi\in\R^d$, where one may take $C_\chi=c_\chi^{-2}\norm{\wh w}_{L^\infty}$.
    
    Now, let $h\in L^2(\R^d)$ be nonnegative. Then $\ip{h}{T_xh}\geq0$ for every $x\in\R^d$, and Plancherel's theorem gives
    \begin{equation*}
        \ip{h}{T_xh}=\int_{\R^d}e^{2\pi\iu x\cdot\xi}\abs{\wh h(\xi)}^2\dxi.
    \end{equation*}
    Since $w\in L^1(\R^d)$ and $\abs{\wh h}^2\in L^1(\R^d)$, the corresponding double integral is absolutely integrable. Fubini's theorem, the evenness of $\wh w$, and the preceding pointwise estimate therefore yield
    \[\int_{\R^d}w(x)\ip{h}{T_xh}\dx=\int_{\R^d}\wh w(\xi)\abs{\wh h(\xi)}^2\dxi\leq C_\chi\int_{\R^d}\abs{\wh\chi(\xi)}^2\abs{\wh h(\xi)}^2\dxi=C_\chi\norm{h*\chi}_2^2.\]
\end{proof}

With the correlation estimate established, we now turn to the proof of universal admissibility.

\begin{proof}[Proof of Theorem~\ref{thm:intro-universal-admissibility}]
    Fix $f\in L^2(\R^d)$. We must show that 
    \[W_\infty(f)=\lim_{N\to\infty}W_N(f)=0,\]
    where
    \[W_N(f)=\sum_{p\in\PathDepth{N}}\norm{U[p]f}_2^2.\]
    For $x\in\R^d$ and $N\in\Nzero$, set
    \[D_N(x)\coloneqq\sum_{p\in\PathDepth{N}}\norm{T_xU[p]f-U[p]f}_2^2.\]
    Translations commute with convolution, and the modulus acts pointwise and is nonexpansive. Therefore,
    \begin{equation*}
        D_{N+1}(x)\leq\sum_{p\in\PathDepth{N}}\sum_{\lambda\in\Lambda}\norm{(T_xU[p]f-U[p]f)*\psi_\lambda}_2^2\leq D_N(x),
    \end{equation*}
    where the second inequality follows from the Parseval identity \eqref{eq:LP-condition}. Consequently,
    \begin{equation}\label{eq:D_NvsD_0}
        D_N(x)\leq D_0(x)=\norm{T_xf-f}_2^2.
    \end{equation}
    For $N\geq1$, every $U[p]f$ with $p\in\PathDepth{N}$ is nonnegative. Set
    \[I_N(x)\coloneqq\sum_{p\in\PathDepth{N}}\ip{U[p]f}{T_xU[p]f}.\]
    For every $x\in\R^d$, nonnegativity and Cauchy--Schwarz give
    \[0\leq I_N(x)\leq\sum_{p\in\PathDepth{N}}\norm{U[p]f}_2\norm{T_xU[p]f}_2=W_N(f)<\infty.\]
    Expanding the square and using the translation invariance of the $L^2$-norm therefore yields
    \[D_N(x)=2W_N(f)-2I_N(x).\]
    Let $r_\chi$, $\kappa$, $C_\chi$, and $w$ be as in Lemma~\ref{lem:correlation}.
    Applying Lemma~\ref{lem:correlation} to $h=U[p]f$, summing over $p\in\PathDepth{N}$, and using Tonelli's theorem for the nonnegative integrands, we obtain
    \begin{equation}\label{eq:aux-int_vanishes_as_N_to_infty}
        \begin{aligned}
            \int_{\R^d}w(x)I_N(x)\dx&=\sum_{p\in\PathDepth{N}}\int_{\R^d}w(x)\ip{U[p]f}{T_xU[p]f}\dx\\&\leq C_\chi\sum_{p\in\PathDepth{N}}\norm{(U[p]f)*\chi}_2^2 \\
            &=C_\chi A_N(f)\xrightarrow{N\to\infty}0.
        \end{aligned}
    \end{equation}
    It remains to convert the decay of the averaged correlations in \eqref{eq:aux-int_vanishes_as_N_to_infty} into the vanishing of the residual energy.

    Fix $\eps>0$. By the strong continuity of translations on $L^2(\R^d)$, there exists $\delta>0$ such that
    \[\norm{T_xf-f}_2^2<\eps\qquad\text{for every }x\in B_\delta(0).\]
    Choose $r>0$ such that
    \[B_r(0)\subset B_{r_\chi}(0)\cap B_\delta(0).\]
    Lemma~\ref{lem:correlation} and \eqref{eq:aux-int_vanishes_as_N_to_infty} imply that
    \begin{equation*}
        \int_{B_r(0)}I_N(x)\dx\leq\frac{1}{\kappa}\int_{\R^d}w(x)I_N(x)\dx\xrightarrow{N\to\infty}0.
    \end{equation*}
    Integrating the identity $2W_N(f)=D_N(x)+2I_N(x)$ over $B_r(0)$ and using~\eqref{eq:D_NvsD_0} gives
    \begin{equation*}
        \begin{aligned}
            2\abs{B_r(0)}W_N(f)&\leq\int_{B_r(0)}\norm{T_xf-f}_2^2\dx+2\int_{B_r(0)}I_N(x)\dx\\&\leq\eps\abs{B_r(0)}+2\int_{B_r(0)}I_N(x)\dx.
        \end{aligned}
    \end{equation*}
    Letting $N\to\infty$ in the preceding estimate yields
    \[2W_\infty(f)\leq \eps.\]
    Since $\eps>0$ is arbitrary, $W_\infty(f)=0$, and hence
    \[\lim_{N\to\infty}\norm{U_Nf}_{\ell^2(\PathDepth{N};L^2(\R^d))}=0.\]
    Finally, letting $N\to\infty$ in \eqref{eq:finite-energy-telescoping} yields
    \[\norm{\Scat f}_{\ell^2(\PathsAll;L^2(\R^d))}^2=\sum_{n=0}^\infty\sum_{p\in\PathDepth{n}}\norm{\Scat[p]f}_2^2=\norm{f}_2^2.\]
    Thus $\Scat$ is norm-preserving.
\end{proof}

\begin{remark}
    The proof of Theorem~\ref{thm:intro-universal-admissibility} is not intrinsically Euclidean: the qualitative argument extends to arbitrary locally compact abelian groups. More precisely, let $\G$ be a locally compact abelian group, equip $\G$ and its Pontryagin dual $\Gdual$ with dual Haar measures for which Plancherel's theorem holds, and suppose that $\chi,\psi_\lambda\in L^1(\G)\cap L^2(\G)$ satisfy
    \[\abs{\wh\chi(\xi)}^2+\sum_{\lambda\in\Lambda}\abs{\wh\psi_\lambda(\xi)}^2=1\qquad\text{for almost every }\xi\in\Gdual,\]
    with $\wh\chi(0)\neq0$. Then the scattering filter bank defined using convolution on $\G$ is admissible on $L^2(\G)$. Indeed, continuity of $\wh\chi$ provides an identity neighborhood in $\Gdual$ on which $\abs{\wh\chi}$ is bounded from below. Choosing a nonnegative function $a\in C_c(\Gdual)$ whose difference support lies in this neighborhood and setting $w\coloneqq \abs{\check a}^2$ gives the analogue of Lemma~\ref{lem:correlation}. Continuity of $w$ and $w(0)>0$ provide a relatively compact identity neighborhood in $\G$ on which $w$ is bounded from below. The remainder of the proof uses only Plancherel's theorem, translation covariance, strong continuity of translations on $L^2(\G)$, and Haar invariance. Nevertheless, we restrict our attention to $\R^d$, as the quantitative estimates below rely on its Euclidean structure.
\end{remark}

\section{A universal Sobolev decay estimate}\label{sec:a-universal-Sobolev-decay-estimate}

We now return to the Euclidean setting and exploit its quantitative translation geometry to derive a universal decay rate for the residual norm of Sobolev inputs, thereby proving Corollary~\ref{cor:intro-Hs-polynomial-decay}.

\begin{proof}[Proof of Corollary~\ref{cor:intro-Hs-polynomial-decay}]
    For $s\geq1$, the assertion follows directly from the case $s=1$ and the continuous embedding $H^s(\R^d)\hookrightarrow H^1(\R^d)$. It therefore suffices to consider $0<s\leq1$, for which we prove that the residual energy satisfies $W_N(f)\lesssim \norm{f}_{H^s(\R^d)}^2 N^{-2s/d}$.

    Throughout the proof, the symbols $\lesssim$ and $\gtrsim$ denote inequalities up to positive multiplicative constants depending only on $s$, $d$, and $\chi$. Fix $f\in H^s(\R^d)$. 

    For $N\in \N$ and $x\in\R^d$, we define again
    \begin{equation*}
        D_N(x)\coloneqq\sum_{p\in\PathDepth{N}}\norm{T_xU[p]f-U[p]f}_2^2,\qquad I_N(x)\coloneqq\sum_{p\in\PathDepth{N}}\ip{U[p]f}{T_xU[p]f}.
    \end{equation*}
    Recall from the proof of Theorem~\ref{thm:intro-universal-admissibility} that
    \[D_N(x)\leq\norm{T_xf-f}_2^2,\qquad D_N(x)=2W_N(f)-2I_N(x).\]
    Let $r_\chi$, $\kappa$, $C_\chi$, and $w$ be given by Lemma~\ref{lem:correlation}. For every $0<r\leq r_\chi$, Tonelli's theorem and Lemma~\ref{lem:correlation} give
    \begin{equation*}
        \begin{aligned}
            \int_{B_r(0)}I_N(x)\dx
            &=\sum_{p\in\PathDepth{N}}
            \int_{B_r(0)}\ip{U[p]f}{T_xU[p]f}\dx\\
            &\leq\frac{1}{\kappa}
            \sum_{p\in\PathDepth{N}}
            \int_{B_r(0)}w(x)\ip{U[p]f}{T_xU[p]f}\dx\\
            &\leq\frac{C_\chi}{\kappa}
            \sum_{p\in\PathDepth{N}}\norm{(U[p]f)*\chi}_2^2\\
            &=\frac{C_\chi}{\kappa}A_N(f).
            \end{aligned}
    \end{equation*}
    Let $\omega_d\coloneqq\abs{B_1(0)}$. Integrating the identity $2W_N(f)=D_N(x)+2I_N(x)$ over $B_r(0)$ yields
    \begin{equation*}
        2\omega_dr^dW_N(f)\leq\int_{B_r(0)}\norm{T_xf-f}_2^2\dx+\frac{2C_\chi}{\kappa}A_N(f).
    \end{equation*}
    By Plancherel's theorem and the estimate $\abs{e^{-2\pi\iu t}-1}\lesssim\abs{t}^s$, we have
    \begin{equation*}
        \norm{T_xf-f}_2^2=\int_{\R^d}\abs{e^{-2\pi\iu x\cdot\xi}-1}^2\abs{\wh f(\xi)}^2\dxi\lesssim\abs{x}^{2s}\norm{f}_{H^s(\R^d)}^2.
    \end{equation*}
    Consequently,
    \begin{equation*}
        \int_{B_r(0)}\norm{T_xf-f}_2^2\dx\lesssim r^{d+2s}\norm{f}_{H^s(\R^d)}^2,
    \end{equation*}
    and hence, for every $0<r\leq r_\chi$,
    \begin{equation}\label{eq:Hs-scale-estimate}
        W_N(f)\lesssim r^{2s}\norm{f}_{H^s(\R^d)}^2+r^{-d}A_N(f).
    \end{equation}

    We next optimize \eqref{eq:Hs-scale-estimate} over the admissible spatial scales. If $A_N(f)=0$, then letting $r\downarrow0$ in \eqref{eq:Hs-scale-estimate} gives $W_N(f)=0$. Suppose therefore that $A_N(f)>0$. Then $\norm{f}_{H^s(\R^d)}>0$, and we set
    \[\rho_N\coloneqq\left(\frac{A_N(f)}{\norm{f}_{H^s(\R^d)}^2}\right)^{\frac{1}{d+2s}}.\]
    We claim that
    \begin{equation}\label{eq:Hs-dissipation-lower-bound}
    A_N(f)\gtrsim\norm{f}_{H^s(\R^d)}^{-\frac{d}{s}}W_N(f)^{1+\frac{d}{2s}}.
    \end{equation}
    
    Suppose first that $\rho_N\leq r_\chi$. Choosing $r=\rho_N$ in \eqref{eq:Hs-scale-estimate} balances its two terms and gives
    \[W_N(f)\lesssim\norm{f}_{H^s(\R^d)}^{\frac{2d}{d+2s}}A_N(f)^{\frac{2s}{d+2s}}.\]
    Raising this inequality to the power $\frac{d+2s}{2s}$ and rearranging yields \eqref{eq:Hs-dissipation-lower-bound}.
    
    Suppose instead that $\rho_N>r_\chi$. Then
    \[A_N(f)>\norm{f}_{H^s(\R^d)}^2r_\chi^{d+2s},\]
    and therefore
    \[r_\chi^{2s}\norm{f}_{H^s(\R^d)}^2<r_\chi^{-d}A_N(f).\]
    Choosing $r=r_\chi$ in \eqref{eq:Hs-scale-estimate} consequently gives
    \[W_N(f)\lesssim r_\chi^{-d}A_N(f)\lesssim A_N(f).\]
    On the other hand, $W_N(f)\leq W_0(f)=\norm{f}_2^2\leq\norm{f}_{H^s(\R^d)}^2$, and hence
    \[\norm{f}_{H^s(\R^d)}^{-\frac{d}{s}}W_N(f)^{1+\frac{d}{2s}}=W_N(f)\left(\frac{W_N(f)}{\norm{f}_{H^s(\R^d)}^2}\right)^{\frac{d}{2s}}\leq W_N(f)\lesssim A_N(f).\]
    This proves \eqref{eq:Hs-dissipation-lower-bound} in both cases.

    We may assume that $N\geq 2$ and $W_N(f)>0$, since otherwise the desired estimate is immediate. The monotonicity of $(W_n(f))_{n\in\Nzero}$ implies that $W_n(f)>0$ for every $1\leq n\leq N$. For each $1\leq n\leq N-1$, the lower bound established in \eqref{eq:Hs-dissipation-lower-bound} gives
    \begin{equation*}
        \begin{aligned}
            W_{n+1}(f)^{-\frac{d}{2s}}-W_n(f)^{-\frac{d}{2s}}&=\frac{d}{2s}\int_{W_{n+1}(f)}^{W_n(f)}t^{-1-\frac{d}{2s}}\dd t\\&\geq\frac{d}{2s}(W_n(f)-W_{n+1}(f))W_n(f)^{-1-\frac{d}{2s}}\\&\gtrsim \norm{f}_{H^s(\R^d)}^{-\frac{d}{s}}.
        \end{aligned}
    \end{equation*}
    Summing these inequalities and telescoping yields
    \[W_N(f)^{-\frac{d}{2s}}-W_1(f)^{-\frac{d}{2s}}=\sum_{n=1}^{N-1}\left(W_{n+1}(f)^{-\frac{d}{2s}}-W_n(f)^{-\frac{d}{2s}}\right)\gtrsim(N-1)\norm{f}_{H^s(\R^d)}^{-\frac{d}{s}}.\]
    In particular,
    \[W_N(f)\lesssim \norm{f}_{H^s(\R^d)}^2(N-1)^{-\frac{2s}{d}}.\]
    Since $N-1\geq\frac{N}{2}$ for $N\geq2$, taking square roots gives
    \[\norm{U_Nf}_{\ell^2(\PathDepth{N};L^2(\R^d))}=W_N(f)^{\frac{1}{2}}\lesssim \norm{f}_{H^s(\R^d)} N^{-\frac{s}{d}}.\]
    This completes the proof.
\end{proof}

\section{Failure of universal exponential decay}\label{sec:failure-of-universal-exponential-decay}

We now show that, despite the faster decay rates available for structured filter banks, exponential decay cannot hold universally under the assumptions of Corollary~\ref{cor:intro-Hs-polynomial-decay}. To prove Theorem~\ref{thm:intro-no-universal-exponential-decay}, we construct a scattering filter bank of band-limited Schwartz functions and a band-limited Schwartz input whose propagated energy satisfies no exponential upper bound.

\begin{proof}[Proof of Theorem~\ref{thm:intro-no-universal-exponential-decay}]
    The proof proceeds in four steps. First, we show that the modulus of a long Dirichlet kernel has most of its $L^2$-energy in a suitable intermediate frequency block. Second, we choose an input whose first modulus generates infinitely many nearly flat blocks of this type. Third, we construct a scattering filter bank whose distinguished channels isolate, phase-align, and repeatedly retain these blocks. Finally, at each scale we follow a path along which a polynomial amount of energy survives to arbitrarily large depth, thereby ruling out exponential decay.

    \smallskip
    \noindent\emph{Step 1: A Fourier-series estimate.}
    We begin on the torus $\T\coloneqq\R/\Z$, equipped with normalized Haar measure and the standard quotient metric induced by $\norm{\cdot}_\T\coloneqq\dist(\,\cdot,\Z)$.
    For $L\geq2$, let
    \begin{equation*}
        D_L(\theta)=\sum_{m=0}^{L-1}e^{2\pi\iu m\theta},\qquad \abs{D_L(\theta)}=\sum_{n\in\Z}d_{L,n}e^{2\pi\iu n\theta}, \qquad \theta\in \T.
    \end{equation*}
    Using
    \[D_L(\theta)=
    \begin{cases}
        e^{\pi\iu(L-1)\theta}\,\frac{\sin(\pi L\theta)}{\sin(\pi\theta)}, & \theta\neq 0, \\
        L, &\theta=0,
    \end{cases}
    \]
    and the bound
    $\abs{\sin(\pi\theta)}\gtrsim\norm{\theta}_\T$, we obtain
    \[\abs{D_L(\theta)}\lesssim\min\set*{L,\norm{\theta}_\T^{-1}},\qquad\theta\in\T.\]
    Here and in the following, the constants are universal and, in particular, independent of $L$. Integrating the pointwise estimate and using the Fourier coefficient formula yield
    \begin{equation*}
        \abs{d_{L,n}}\leq\norm{\abs{D_L}}_{L^1(\T)}=\norm{D_L}_{L^1(\T)}\lesssim\log(L).
    \end{equation*}
    The function $\abs{D_L}$ is absolutely continuous and satisfies
    \[\abs{(\abs{D_L})^\prime(\theta)}\leq\abs{D_L^\prime(\theta)}\]
    for almost every $\theta\in\T$. Hence Parseval's identity gives
    \begin{equation*}
        \sum_{n\in\Z}n^2\abs{d_{L,n}}^2=\frac{1}{4\pi^2}\norm{(\abs{D_L})^\prime}_{L^2(\T)}^2\leq\frac{1}{4\pi^2}\norm{D_L^\prime}_{L^2(\T)}^2=\sum_{m=0}^{L-1}m^2\lesssim L^3.
    \end{equation*}
    For integers $1\leq A\leq B$, let $P_{A,B}$ denote the Fourier projection on $L^2(\T)$ onto the frequencies in
    \[\set*{n\in\Z\given A\leq\abs{n}\leq B}.\]
    The low-frequency contribution satisfies
    \[\sum_{\abs{n}<A}\abs{d_{L,n}}^2\lesssim A\log^2(L),\]
    whereas the derivative estimate gives
    \begin{equation*}
        \sum_{\abs{n}>B}\abs{d_{L,n}}^2\leq\frac{1}{B^2}\sum_{n\in\Z}n^2\abs{d_{L,n}}^2\lesssim\frac{L^3}{B^2}.
    \end{equation*}
    Since $\norm{\abs{D_L}}_{L^2(\T)}^2=\norm{D_L}_{L^2(\T)}^2=L$, it follows that
    \begin{equation}\label{eq:dirichlet-projection-estimate}
        \frac{\norm{(\Id-P_{A,B})\abs{D_L}}_{L^2(\T)}^2}{\norm{\abs{D_L}}_{L^2(\T)}^2}\lesssim\frac{A\log^2(L)}{L}+\frac{L^2}{B^2}.
    \end{equation}
    We choose the parameters at different integer scales $j\geq 1$ so that both the projection error and the relative variation of the Fourier coefficients are of order $A_j^{-1}$. This will allow us to iterate the resulting channel on the order of $A_j$ times. To this end, let $A_1\geq2$ be an integer whose size will be fixed below, and define recursively, for every integer $j\geq1$,
    \begin{equation}\label{eq:parameter-recursion}
        L_j\coloneqq A_j^{4},\qquad B_j\coloneqq A_jL_j=A_j^{5},\qquad N_j\coloneqq 2B_j,\qquad A_{j+1}\coloneqq N_j+L_j.
    \end{equation}
    Substituting \eqref{eq:parameter-recursion} into \eqref{eq:dirichlet-projection-estimate} and using $\log^2(L_j)\lesssim A_j$, we obtain a universal constant $C_1\geq1$ such that
    \begin{equation}\label{eq:dirichlet-block-projection}
        \frac{\norm{(\Id-P_{A_j,B_j})\abs{D_{L_j}}}_{L^2(\T)}}{\norm{\abs{D_{L_j}}}_{L^2(\T)}}\leq C_1A_j^{-1}
    \end{equation}
    for every $j\geq1$.
    The preceding estimate identifies an intermediate frequency window in which the modulus of a long Fourier block retains almost all of its energy. We next construct an input whose first modulus generates such nearly flat blocks at infinitely many scales.

    \smallskip
    \noindent\emph{Step 2: An input signal.}
    Set $\eps=1/4$. Choose a nonzero function $a\in C_c^\infty((-\eps/2,\eps/2))$ and set
    \[\phi=\abs{\check a}^2.\]
    Then $\phi\in\calS(\R)$, $\phi\geq0$, and
    \[\supp\wh\phi\subset[-\eps,\eps].\]
    Define
    \begin{equation}\label{eq:input-function}
        f(x)=\phi(x)\cos(2\pi x),\quad x\in\R.
    \end{equation}
    Thus $f$ is a nonzero band-limited Schwartz function.

    The Fourier series of $\abs{\cos(2\pi\cdot)}$ is
    \begin{equation}\label{eq:absolute-cosine-series}
        \abs{\cos(2\pi \cdot)}=\sum_{n\in\Z}c_ne^{4\pi\iu n\cdot},\qquad c_n=\frac{2(-1)^{n+1}}{\pi(4n^2-1)}.
    \end{equation}
    In particular,
    \begin{equation}\label{eq:cosine-coefficient-decay}
        c_n\neq0,\qquad \abs{c_n}\asymp(1+\abs{n})^{-2}.
    \end{equation}
    For $j\geq1$, define
    \begin{equation}\label{eq:Gj-definition}
        G_j\coloneqq\abs{\sum_{m=0}^{L_j-1}\frac{\abs{c_{N_j+m}}}{\abs{c_{N_j}}}e^{2\pi\iu m\,\cdot}}.
    \end{equation}
    For $0\leq m<L_j$, the explicit formula in \eqref{eq:absolute-cosine-series} gives
    \[\frac{\abs{c_{N_j+m}}}{\abs{c_{N_j}}}=\frac{4N_j^2-1}{4(N_j+m)^2-1}.\]
    Since $L_j<N_j$, it follows that
    \[\abs{\frac{\abs{c_{N_j+m}}}{\abs{c_{N_j}}}-1}=\frac{8N_jm+4m^2}{4(N_j+m)^2-1}\lesssim\frac{m}{N_j}\leq\frac{L_j}{N_j}.\]
    Using \eqref{eq:parameter-recursion}, we therefore obtain a universal constant $C_2\geq1$ such that
    \begin{equation}\label{eq:coefficient-flatness}
    \max_{0\leq m<L_j}\abs{\frac{\abs{c_{N_j+m}}}{\abs{c_{N_j}}}-1}\leq C_2\frac{L_j}{N_j}\leq\frac{C_2}{A_j}.
    \end{equation}
    Since the modulus is nonexpansive, Parseval's identity and \eqref{eq:coefficient-flatness} yield
    \begin{equation}\label{eq:Gj-dirichlet-approximation}
        \norm{G_j-\abs{D_{L_j}}}_{L^2(\T)}\leq\norm{\sum_{m=0}^{L_j-1}\left(\frac{\abs{c_{N_j+m}}}{\abs{c_{N_j}}}-1\right)e^{2\pi\iu m\,\cdot}}_{L^2(\T)}\leq C_2A_j^{-1}L_j^{1/2}.
    \end{equation}
    Set $C_3=2(C_1+C_2)$ and, once and for all, choose the integer $A_1\geq2$ sufficiently large that $A_1\geq4C_3$, hence $C_2A_1^{-1}\leq 1/2$.
    Since the sequence $(A_j)_{j\in\N}$ is increasing, these inequalities remain valid with $A_1$ replaced by any $A_j$. It follows from \eqref{eq:Gj-dirichlet-approximation} and $\norm{\abs{D_{L_j}}}_{L^2(\T)}=L_j^{1/2}$ that
    \begin{equation*}
        \norm{G_j}_{L^2(\T)}\geq\left(1-C_2A_j^{-1}\right)L_j^{1/2}\geq\frac12L_j^{1/2}.
    \end{equation*}
    Moreover, by \eqref{eq:dirichlet-block-projection} and \eqref{eq:Gj-dirichlet-approximation},
    \begin{equation*}
        \norm{(\Id-P_{A_j,B_j})G_j}_{L^2(\T)}\leq(C_1+C_2)A_j^{-1}L_j^{1/2}.
    \end{equation*}
    Dividing this estimate by the preceding lower bound proves
    \begin{equation}\label{eq:Gj-projection-estimate}
        \frac{\norm{(\Id-P_{A_j,B_j})G_j}_{L^2(\T)}}{\norm{G_j}_{L^2(\T)}}\leq C_3A_j^{-1},\qquad \norm{G_j}_{L^2(\T)}\geq\frac12L_j^{1/2}.
    \end{equation}
    The estimate \eqref{eq:Gj-projection-estimate} therefore provides the frequency blocks that the scattering channels must preserve. We now construct a scattering filter bank containing distinguished filters adapted to these blocks.

    \smallskip
    \noindent\emph{Step 3: Construction of a filter bank.}
    Choose an even function $\vartheta\in C^\infty(\R;[0,\pi/2])$ such that
    \begin{equation*}
        \vartheta(\xi)=0\quad\text{for }\abs{\xi}\leq \eps,\qquad \vartheta(\xi)=\pi/2\quad\text{for }\abs{\xi}\geq 2\eps,
    \end{equation*}
    and set
    \begin{equation}\label{eq:low-pass-definition}
        \wh\chi \coloneqq\cos\vartheta,\qquad \eta\coloneqq\sin\vartheta.
    \end{equation}
    Then $\wh\chi\in C_c^\infty(\R)$, $\wh\chi=1$ on $[-\eps,\eps]$, and
    \begin{equation}\label{eq:low-high-partition}
        \abs{\wh\chi(\xi)}^2+\eta(\xi)^2=1.
    \end{equation}

    For $n\in\Z$, define
    \[I_n\coloneqq [2n-\eps,2n+\eps],\]
    and let
    \begin{equation*}
        E_j\coloneqq \set*{n\in\Z\given N_j\leq n<N_j+L_j},\qquad R_j\coloneqq \set*{n\in\Z\given A_j\leq\abs{n}\leq B_j}.
    \end{equation*}
    Consider the compact sets
    \begin{equation}\label{eq:distinguished-compact-sets}
            K_\ast\coloneqq \left[-1-\eps,-1+\eps\right]\cup \left[1-\eps,1+\eps\right], \quad
            K_{e_j}\coloneqq \bigcup_{n\in E_j}I_n, \quad\text{and} \quad
            K_{r_j}\coloneqq \bigcup_{n\in R_j}I_n.
    \end{equation}
    The recursion \eqref{eq:parameter-recursion} gives
    \[B_j<N_j,\qquad N_j+L_j-1<A_{j+1},\]
    and hence the sets in \eqref{eq:distinguished-compact-sets}, taken over all $j\geq1$, are pairwise disjoint. Figure~\ref{fig:counterexample-frequency-schematic} illustrates these sets and their role in the construction below.  

    We now construct a countable family $(q_\lambda)_{\lambda\in\Lambda}\subset C_c^\infty(\R)$ containing distinguished functions $q_\ast,q_{e_j},q_{r_j}$, for all $j\geq 1$, such that
    \begin{equation}\label{eq:q-quadratic-partition}
        \sum_{\lambda\in\Lambda}\abs{q_\lambda(\xi)}^2=1\qquad\text{for every }\xi\in\R,
    \end{equation}
    and
    \begin{equation}\label{eq:distinguished-q-values}
        \begin{aligned}
            q_\ast&=1&&\text{on }K_\ast,\\
            q_{e_j}&=\frac{\overline{c_n}}{\abs{c_n}}&&\text{on }I_n,\quad n\in E_j,\\
            q_{r_j}&=1&&\text{on }K_{r_j}.
        \end{aligned}
    \end{equation}
    In addition, $q_{e_j}$ vanishes on $I_n$ for $n\notin E_j$, and $q_{r_j}$ vanishes on $I_n$ for $n\notin R_j$.


\begin{figure}[t]
\centering
\begin{tikzpicture}[
    x=1cm,
    y=1cm,
    >={Latex},
    font=\small,
    axis/.style={line width=0.7pt,->},
    tick/.style={line width=0.5pt},
    brace/.style={
        decorate,
        decoration={brace,amplitude=4pt}
    },
    orangebox/.style={
        draw=orange!85!black,
        fill=orange!18,
        line width=0.8pt
    },
    bluebox/.style={
        draw=blue!85!black,
        fill=blue!15,
        line width=0.8pt
    },
    redbox/.style={
        draw=red!85!black,
        fill=red!15,
        line width=0.8pt
    },
    greenbox/.style={
        draw=green!60!black,
        fill=green!16,
        line width=0.8pt
    },
    graybox/.style={
        draw=gray!65,
        fill=gray!10,
        line width=0.7pt
    },
    insetbox/.style={
        draw=black!55,
        rounded corners=2pt,
        line width=0.55pt
    },
    trapped/.style={
        draw=gray!50,
        fill=gray!9,
        rounded corners=2pt,
        line width=0.65pt
    },
    panel heading/.style={
        anchor=west,
        font=\bfseries\large,
        align=left
    },
    description/.style={
        anchor=west,
        align=left
    },
    every node/.style={align=center}
]


\def\PanelLeft{0.10}
\def\AxisStart{1.10}
\def\AxisEnd{12.00}
\def\AxisLabel{12.13}

\coordinate (panel-a-origin) at (0,0);
\coordinate[below=3.15cm of panel-a-origin] (panel-b-origin);
\coordinate[below=3.30cm of panel-b-origin] (panel-c-origin);


\begin{scope}[shift={(panel-a-origin)}]

    \node[panel heading] at (\PanelLeft,1.25) {(a)};

    \draw[axis] (\AxisStart,0) -- (\AxisEnd,0);
    \node[anchor=west] at (\AxisLabel,0) {$\xi$};

    \def\InputMinus{3.90}
    \def\InputZero{5.70}
    \def\InputPlus{7.50}

    \foreach \x/\lbl in {
        \InputMinus/$-1$,
        \InputZero/$0$,
        \InputPlus/$\:1$
    }{
        \draw[tick] (\x,-0.10) -- (\x,0.10);
        \node[below=4pt] at (\x,-0.10) {\lbl};
    }

    \draw[orangebox]
        ({\InputMinus-0.45},0.18) rectangle ++(0.90,0.22);
    \draw[bluebox]
        ({\InputZero-0.45},0.18) rectangle ++(0.90,0.22);
    \draw[orangebox]
        ({\InputPlus-0.45},0.18) rectangle ++(0.90,0.22);

    \pgfmathsetmacro{\InputMinusLeft}{\InputMinus-0.45}
    \pgfmathsetmacro{\InputMinusRight}{\InputMinus+0.45}
    \draw[brace]
        (\InputMinusLeft,0.6) -- (\InputMinusRight,0.6);
    \node[above=3pt]
        at (\InputMinus,0.75) {$K_\ast^-$};

    \node[above=3pt]
        at (\InputZero,0.75) {$\wh\chi=1$};

    \pgfmathsetmacro{\InputPlusLeft}{\InputPlus-0.45}
    \pgfmathsetmacro{\InputPlusRight}{\InputPlus+0.45}
    \draw[brace]
        (\InputPlusLeft,0.6) -- (\InputPlusRight,0.6);
    \node[above=3pt]
        at (\InputPlus,0.75) {$K_\ast^+$};

\end{scope}


\begin{scope}[shift={(panel-b-origin)}]

    \node[panel heading]
        at (\PanelLeft,1.25) {(b)};

    \draw[axis] (\AxisStart,0) -- (\AxisEnd,0);
    \node[anchor=west] at (\AxisLabel,0) {$\xi$};

    \def\TickZeroB{1.3}
    \draw[tick] (\TickZeroB,-0.10) -- (\TickZeroB,0.10);
    \node[below=4pt] at (\TickZeroB,-0.10) {$0$};


    \def\PacketWidth{0.17}
    \def\PacketStep{0.30}
    \def\FirstGrayStart{3.25}
    \def\RedStart{6.75}
    \def\LastGrayStart{10.58}

    \node at (2.83,0.29) {$\cdots$};

    \foreach \k in {0,...,8}{
        \draw[graybox]
            ({\FirstGrayStart+\PacketStep*\k},0.18)
            rectangle ++(\PacketWidth,0.22);
    }

    \node at (6.3,0.29) {$\cdots$};

    \foreach \k in {0,...,9}{
        \draw[redbox]
            ({\RedStart+\PacketStep*\k},0.18)
            rectangle ++(\PacketWidth,0.22);
    }

    \foreach \k in {0,1,2}{
        \draw[graybox]
            ({\LastGrayStart+\PacketStep*\k},0.18)
            rectangle ++(\PacketWidth,0.22);
    }

    \pgfmathsetmacro{\RedEnd}{
        \RedStart+9*\PacketStep+\PacketWidth
    }
    \pgfmathsetmacro{\RedCenter}{0.5*(\RedStart+\RedEnd)}

    \draw[brace]
        (\RedStart,0.6) -- (\RedEnd,0.6);
    \node[above=3pt]
        at (\RedCenter,0.75) {$K_{e_j}$};

    \draw[tick] (\RedStart+\PacketWidth/2,-0.10) -- (\RedStart+\PacketWidth/2,0.10);
    \draw[tick] (\RedEnd-\PacketWidth/2,-0.10) -- (\RedEnd-\PacketWidth/2,0.10);

    \node[below=4pt]
        at (\RedStart+\PacketWidth/2,-0.10) {$2N_j$};
    \node[below=4pt]
        at (\RedEnd-\PacketWidth/2,-0.10) {$2(N_j+L_j-1)$};

    \node at (10.13,0.29) {$\cdots$};

\end{scope}


\begin{scope}[shift={(panel-c-origin)}]

    \node[panel heading]
        at (\PanelLeft,1.25) {(c)};

    \def\TickMinusB{2.29}
    \def\TickMinusA{3.61}
    \def\TickZero{5.70}
    \def\TickPlusA{7.79}
    \def\TickPlusB{9.11}

    \draw[axis] (\AxisStart,0) -- (\AxisEnd,0);
    \node[anchor=west] at (\AxisLabel,0) {$\xi$};

    \foreach \x/\lbl in {
        \TickMinusB/$-2B_j$,
        \TickMinusA/$-2A_j$,
        \TickZero/$0$,
        \TickPlusA/$2A_j$,
        \TickPlusB/$2B_j$
    }{
        \draw[tick] (\x,-0.10) -- (\x,0.10);
        \node[below=4pt] at (\x,-0.10) {\lbl};
    }

    \def\GreenWidth{0.18}
    \def\GreenStep{0.33}
    \def\NegativeGreenStart{2.20}
    \def\PositiveGreenStart{7.70}

    \foreach \k in {0,...,4}{
        \draw[greenbox]
            ({\NegativeGreenStart+\GreenStep*\k},0.18)
            rectangle ++(\GreenWidth,0.22);
    }

    \foreach \k in {0,...,4}{
        \draw[greenbox]
            ({\PositiveGreenStart+\GreenStep*\k},0.18)
            rectangle ++(\GreenWidth,0.22);
    }

    \pgfmathsetmacro{\NegativeGreenEnd}{
        \NegativeGreenStart+4*\GreenStep+\GreenWidth
    }
    \pgfmathsetmacro{\PositiveGreenEnd}{
        \PositiveGreenStart+4*\GreenStep+\GreenWidth
    }
    \pgfmathsetmacro{\NegativeGreenCenter}{
        0.5*(\NegativeGreenStart+\NegativeGreenEnd)
    }
    \pgfmathsetmacro{\PositiveGreenCenter}{
        0.5*(\PositiveGreenStart+\PositiveGreenEnd)
    }

    \draw[brace]
        (\NegativeGreenStart,0.6)
        -- (\NegativeGreenEnd,0.6);
    \node[above=3pt]
        at (\NegativeGreenCenter,0.75) {$K_{r_j}^{-}$};

    \draw[brace]
        (\PositiveGreenStart,0.6)
        -- (\PositiveGreenEnd,0.6);
    \node[above=3pt]
        at (\PositiveGreenCenter,0.75) {$K_{r_j}^{+}$};

\end{scope}
\end{tikzpicture}

\caption{Frequency-space schematic of the slowly decaying scattering paths; the panels are not drawn to a common scale. (a) The input satisfies $\supp(\wh{f})\subset K_\ast$, where $\wh\psi_\ast|_{K_\ast}=1$ acts as the identity; the blue block records that $\wh{\chi}=1$ near the origin. (b) The output $U[\psi_\ast]f$ consists of packets $I_n=[2n-\eps,2n+\eps]$, of which $\psi_{e_j}$ selects and phase-aligns $K_{e_j}$. (c) The resulting function $g_j=U[\psi_\ast,\psi_{e_j}]f$ has relative $L^2$-leakage of order $A_j^{-1}$ outside $K_{r_j}$. Since $\smash{\wh\psi_{r_j}=1}$ on $K_{r_j}$, the channel $U[\psi_{r_j}]$ can be iterated $m_j\asymp A_j$ times while retaining a fixed proportion of $\norm{g_j}_2$. The TikZ implementation was developed with assistance from ChatGPT (GPT-5.6 Sol) and subsequently revised by the author.}
\label{fig:counterexample-frequency-schematic}
\end{figure}

    Denote by $\calJ$ the collection of all intervals that occur as connected components of the sets in \eqref{eq:distinguished-compact-sets}. For each interval $J\in \calJ$, set
    \[O(J)\coloneqq \set*{\xi\in\R\given\dist(\xi,J)<\eps}.\]
    The components have centers separated by at least $2$. Each component has radius $\eps$, and each neighborhood $O(J)$ has radius $2\eps$. Since $4\eps<2$, the neighborhoods $O(J)$ are pairwise disjoint. Moreover, the parameter recursion implies that their centers tend to infinity in absolute value, so every bounded subset of $\R$ meets only finitely many of them.

    For each interval $J\in\calJ$, choose $\theta_J\in C_c^\infty(\R;[0,1])$ with $\supp\theta_J\Subset O(J)$ and with $\theta_J$ identically one in a neighborhood of $J$. Denote the two components of $K_\ast$ by $J_+$ and $J_-$. Define
    \begin{equation*}
        \begin{aligned}
            q_\ast&\coloneqq\sum_{\sigma\in\set*{+,-}}\sin\left(\frac{\pi}{2}\theta_{J_\sigma}\right), \\
            q_{r_j}&\coloneqq\sum_{n\in R_j}\sin\left(\frac{\pi}{2}\theta_{I_n}\right),\\
            q_{e_j}&\coloneqq\sum_{n\in E_j}\frac{\overline{c_n}}{\abs{c_n}}\sin\left(\frac{\pi}{2}\theta_{I_n}\right).
        \end{aligned}
    \end{equation*}
    Since the neighborhoods $O(J)$ are pairwise disjoint and each displayed sum is finite, these are smooth compactly supported functions satisfying \eqref{eq:distinguished-q-values} and the stated vanishing properties.
    
    Define $b\in C^\infty(\R;[0,1])$ by
    \begin{equation*}
        b(\xi)\coloneqq
        \begin{cases}
            \displaystyle
            \cos\left(\frac{\pi}{2}\theta_J(\xi)\right),
            & \xi\in O(J)\text{ for some }J\in\calJ,\\[1ex]
            1,
            & \displaystyle
            \xi\notin\bigcup_{J\in\calJ}O(J).
        \end{cases}
    \end{equation*}
    The pairwise disjointness of the neighborhoods and the fact that each $\theta_J$ is compactly supported in $O(J)$ show that $b$ is indeed smooth. Let
    \[\Lambda_0\coloneqq\set*{\ast}\cup\set*{e_j,r_j\given j\in\N}.\]
    At every $\xi\in O(J)$, precisely one function $q_\lambda$, $\lambda\in\Lambda_0$, is active, and hence
    \[\abs{b(\xi)}^2+\sum_{\lambda\in\Lambda_0}\abs{q_\lambda(\xi)}^2=\sin^2\left(\frac{\pi}{2}\theta_J(\xi)\right)+\cos^2\left(\frac{\pi}{2}\theta_J(\xi)\right)=1.\]
    Outside the union of the neighborhoods $O(J)$, all distinguished functions vanish and $b=1$, so the same identity holds there.

    It remains only to decompose $b$ into compactly supported terms. Let
    $(\rho_k)_{k\in\Z}\subset C_c^\infty(\R)$ be a locally finite family such that
    \[\sum_{k\in\Z}\abs{\rho_k(\xi)}^2=1\qquad\text{for every }\xi\in\R,\]
    and define $q_k\coloneqq b\rho_k$. Then
    \[\sum_{k\in\Z}\abs{q_k(\xi)}^2=\abs{b(\xi)}^2.\]
    After adjoining these functions to the distinguished family, \eqref{eq:q-quadratic-partition} holds for the countable index set
    \[\Lambda\coloneqq\Z\sqcup\Lambda_0.\]
    For every $\lambda\in\Lambda$, define $\psi_\lambda$ in the Fourier domain by
    \begin{equation}\label{eq:filter-multiplier-definition}
        \wh{\psi_\lambda}\coloneqq \eta \cdot q_\lambda.
    \end{equation}
    Each multiplier in \eqref{eq:filter-multiplier-definition} belongs to $C_c^\infty(\R)$, as does $\wh\chi$. Hence 
    \[\chi,\psi_\lambda\in\calS(\R)\subset L^1(\R)\cap L^2(\R).\] 
    By \eqref{eq:low-high-partition} and \eqref{eq:q-quadratic-partition},
    \begin{equation*}
        \abs{\wh\chi(\xi)}^2+\sum_{\lambda\in\Lambda}\abs{\wh{\psi_\lambda}(\xi)}^2=\abs{\wh\chi(\xi)}^2+\eta(\xi)^2\sum_{\lambda\in\Lambda}\abs{q_\lambda(\xi)}^2=1
    \end{equation*}
    for every $\xi\in\R$. Thus $(\chi,\Psi)$, with $\Psi=(\psi_\lambda)_{\lambda\in\Lambda}$, is a scattering filter bank. With the filter bank constructed, it remains to exhibit scattering paths along which a nonexponentially small amount of energy survives to arbitrarily large depths.

    \smallskip
    \noindent\emph{Step 4: A slowly decaying path.}

    Since
    \[\wh f(\xi)=\frac12\wh\phi(\xi-1)+\frac12\wh\phi(\xi+1),\]
    the Fourier support of $f$ is contained in $K_\ast$. Moreover, $\eta q_\ast=1$ on $K_\ast$, and therefore $f*\psi_\ast=f$.

    By \eqref{eq:cosine-coefficient-decay}, the series in \eqref{eq:absolute-cosine-series} converges absolutely and uniformly. Since $\phi\in L^2(\R)$, multiplication by $\phi$ consequently gives the following convergence in $L^2(\R)$:
    \begin{equation}\label{eq:first-modulus-expansion}
    U[\psi_\ast]f=\phi\abs{\cos(2\pi\,\cdot)}=\sum_{n\in\Z}c_n\phi(\,\cdot\,)e^{4\pi\iu n\,\cdot}.
    \end{equation}
    The Fourier support of the $n$th summand is contained in $I_n$. 
    
    By the definition of $\psi_{e_j}$, for any $j\in\N$,
    \begin{equation}\label{eq:gj-path-output}
        \begin{aligned}
            U[\psi_\ast,\psi_{e_j}]f&=\abs{\sum_{n=N_j}^{N_j+L_j-1}\abs{c_n}\phi(\cdot)\, e^{4\pi\iu n\,\cdot}}\\
            &=\abs{c_{N_j}}\phi(\cdot) G_j(2 \,\cdot)\\&=:g_j.
        \end{aligned}
    \end{equation}

    Since $\supp\wh\phi\subset[-\eps,\eps]$ and $2\eps<2$, the supports of
    $T_{2n}\wh\phi$ and $T_{2m}\wh\phi$ are disjoint whenever
    $n,m\in\Z$ and $n\neq m$. Hence
    \[\ip{T_{2n}\wh\phi}{T_{2m}\wh\phi}=0.\]

    Let $H\in L^2(\T)$ have Fourier expansion
    \[H=\sum_{n\in\Z}h_ne^{2\pi\iu n\,\cdot}.\]
    This series converges unconditionally in $L^2(\T)$. Since the Fourier supports of the functions $e^{4\pi\iu n\,\cdot}\phi$, $n\in\Z$, are pairwise disjoint, these functions form an orthogonal family in $L^2(\R)$, with
    \[\norm{e^{4\pi\iu n\,\cdot}\phi}_2=\norm{\phi}_2.\]
    Consequently, the series
    \[\sum_{n\in\Z}h_ne^{4\pi\iu n\,\cdot}\phi\]
    converges unconditionally in $L^2(\R)$. Its limit agrees with $\phi(\cdot)H(2\,\cdot)$. Indeed, an $L^2(\T)$-convergent sequence of Fourier partial sums has a subsequence converging almost everywhere on $\T$, and the corresponding dilated products converge almost everywhere on $\R$; uniqueness of the $L^2(\R)$ limit gives the identification. Parseval's identity therefore yields
    \begin{equation}\label{eq:dilated-orthogonality}
        \norm{\phi(\cdot)H(2\,\cdot)}_2^2=\sum_{n\in\Z}\abs{h_n}^2\norm{\phi}_2^2=\norm{\phi}_2^2\norm{H}_{L^2(\T)}^2.
    \end{equation}
    In particular, 
    \begin{equation}\label{eq:gj-norm}
        \norm{g_j}_2=\abs{c_{N_j}}\norm{\phi}_2\norm{G_j}_{L^2(\T)}.
    \end{equation}
    Next, observe that the Fourier support of $\phi(\cdot) G_j(2\,\cdot)$ is contained in $\bigcup_{n\in\Z} I_n$. The multiplier $\wh{\psi_{r_j}}$ equals $1$ on $I_n$ whenever $A_j\leq\abs{n}\leq B_j$ and vanishes on every interval $I_n$ with $\abs{n}<A_j$ or $\abs{n}>B_j$. Consequently, 
    \begin{equation}\label{eq:rj-projection-identity}
        \bigl(\phi(\cdot) G_j(2\,\cdot)\bigr)*\psi_{r_j}=\phi(\cdot)\bigl(P_{A_j,B_j}G_j\bigr)(2\,\cdot).
    \end{equation}
    Using \eqref{eq:Gj-projection-estimate}, \eqref{eq:dilated-orthogonality}, and \eqref{eq:gj-norm}, we obtain
    \begin{equation}\label{eq:gj-rj-approximation}
        \begin{aligned}
            \norm{g_j*\psi_{r_j}-g_j}_2&=\abs{c_{N_j}}\norm{\phi(\cdot) (\Id-P_{A_j,B_j})G_j(2\,\cdot)}_2 \\
            &=\abs{c_{N_j}}\norm{\phi}_2\norm{(\Id-P_{A_j,B_j})G_j}_{L^2(\T)} \\
            &=\frac{\norm{(\Id-P_{A_j,B_j})G_j}_{L^2(\T)}}{\norm{G_j}_{L^2(\T)}}\norm{g_j}_2\\
            &\leq C_3A_j^{-1}\norm{g_j}_2.
        \end{aligned}
    \end{equation}
    Since $g_j\geq0$, the nonexpansiveness of the modulus gives
    \begin{equation*}
        \norm{U[\psi_{r_j}]g_j-g_j}_2=\norm{\abs{g_j*\psi_{r_j}}-\abs{g_j}}_2\leq\norm{g_j*\psi_{r_j}-g_j}_2.
    \end{equation*}
    Together with \eqref{eq:gj-rj-approximation}, this yields
    \begin{equation}\label{eq:UvsId}
        \norm{U[\psi_{r_j}]g_j-g_j}_2\leq C_3A_j^{-1}\norm{g_j}_2.
    \end{equation}
    For $m\in\N$, define
    \begin{equation*}
        (U[\psi_{r_j}])^{(m)}\coloneqq\underbrace{U[\psi_{r_j}]\cdots U[\psi_{r_j}]}_{m\text{ times}}, \quad (U[\psi_{r_j}])^{(0)}\coloneqq\Id_{L^2(\R)}.
    \end{equation*}
    By the nonexpansiveness of the single-step scattering propagator $U[\psi_{r_j}]$, each increment in the following telescoping sum is bounded by $\|U[\psi_{r_j}]g_j-g_j\|_2$. Hence \eqref{eq:UvsId} gives
    \begin{equation}\label{eq:Tj-iteration-estimate}
        \norm{(U[\psi_{r_j}])^{(m)}g_j-g_j}_2\leq \sum_{k=1}^m \norm{(U[\psi_{r_j}])^{(k)}g_j-(U[\psi_{r_j}])^{(k-1)}g_j}_2 \leq mC_3A_j^{-1}\norm{g_j}_2.
    \end{equation}
    For every $j\in \N$, set
    \begin{equation}\label{eq:mj-definition}
        m_j=\left\lfloor\frac{A_j}{2C_3}\right\rfloor\asymp A_j.
    \end{equation}
    Our choice $A_1\geq4C_3$ ensures that $m_j\geq1$. 

    It remains to estimate the size of $g_j$. Plugging the relations 
    \[|c_{N_j}|\gtrsim N_j^{-2}\gtrsim A_j^{-10} \quad \text{and} \quad \norm{G_j}_{L^2(\T)}\gtrsim L_j^{1/2}=A_j^2\] 
    into \eqref{eq:gj-norm} shows that there exists a constant $c_0>0$, independent of $j$, such that
    \begin{equation}\label{eq:gj-lower-bound}
        \norm{g_j}_2\geq 2 c_0A_j^{-8}.
    \end{equation}
    Combining this with \eqref{eq:Tj-iteration-estimate} and \eqref{eq:mj-definition}, we obtain
    \begin{equation}\label{eq:Tj-lower-bound}
        \norm{(U[\psi_{r_j}])^{(m_j)}g_j}_2\geq\norm{g_j}_2-\norm{(U[\psi_{r_j}])^{(m_j)}g_j-g_j}_2\geq\frac12\norm{g_j}_2.
    \end{equation}
    The path
    \begin{equation*}
        p_j=\bigl(\psi_\ast,\psi_{e_j},\underbrace{\psi_{r_j},\ldots,\psi_{r_j}}_{m_j\text{ times}}\bigr)
    \end{equation*}
    has length $m_j+2$. By \eqref{eq:gj-lower-bound} and \eqref{eq:Tj-lower-bound},
    \[\norm{U_{m_j+2}f}_{\ell^2(\PathDepth{m_j+2};L^2(\R))}\geq\norm{U[p_j]f}_2=\norm{(U[\psi_{r_j}])^{(m_j)}g_j}_2\geq c_0 A_j^{-8}.\]
    Since $m_j\asymp A_j\to\infty$, the previous bound yields
    \begin{equation*}
        \limsup_{N\to\infty}\norm{U_Nf}_{\ell^2(\PathDepth{N};L^2(\R))}^{1/N}\geq\lim_{j\to\infty}\left(c_0 A_j^{-8}\right)^{\frac{1}{m_j+2}}=1.
    \end{equation*}
    On the other hand, the Parseval property of the filter bank gives
    \[\norm{U_Nf}_{\ell^2(\PathDepth{N};L^2(\R))}\leq\norm{f}_2\]
    for every $N\geq1$. Taking $N$th roots shows that the corresponding limsup is at most $1$, which proves the claim.
\end{proof}

\section*{Use of generative AI}
The author used ChatGPT (GPT-5.6 Sol, OpenAI), accessed through \url{https://chatgpt.com} during July and August 2026, in iterative dialogues concerning proof strategies and mathematical exposition. In particular, after the author had formulated and conjectured the main result in Theorem~\ref{thm:intro-universal-admissibility}, the tool suggested the underlying weighted-correlation proof mechanism. It also contributed a proof strategy for a preliminary version of Corollary~\ref{cor:intro-Hs-polynomial-decay}, and assisted with the iterated-channel argument controlling the propagation of energy through deeper network layers in Section~\ref{sec:failure-of-universal-exponential-decay} and the TikZ implementation of Figure~\ref{fig:counterexample-frequency-schematic}. The prompts included arguments and constructions previously developed or proposed by the author. 

The LLM-generated arguments served as starting points rather than final proofs. The author verified every mathematical statement and argument influenced by LLM-generated material, supplied missing computations and intermediate steps, replaced several proposed arguments with more elementary proofs, and substantially rewrote and reorganized the resulting exposition. The author takes full responsibility for the accuracy, integrity, and originality of the final manuscript, including all mathematical claims, proofs, formulations, and conclusions.

\section*{Acknowledgments}
This work was funded by the Deutsche Forschungsgemeinschaft (DFG, German Research Foundation)---Project-ID 442047500---SFB 1481, Sparsity and Singular Structures. The author thanks Hartmut F\"{u}hr for discussions concerning cosine-modulated constructions that informed the counterexample in Section~\ref{sec:failure-of-universal-exponential-decay}, and Ay\c{s}eg\"{u}l Alabal{\i}k for carefully reading an earlier draft of this paper and providing helpful comments.

\bibliographystyle{plain}
\bibliography{bib}
\vspace{1em}
\end{document}